\newtheorem{dfn}{Definition}
\newtheorem{teo}{Theorem}
\newtheorem{cor}[teo]{Corollary}
\newtheorem{lema}[teo]{Lemma}
\newcommand{\CC}{\mathbb{C}}
\newcommand{\RR}{\mathbb{R}}
\newcommand{\ZZ}{\mathbb{Z}}
\newcommand{\Hc}{\mathcal{H}}
\newcommand{\Tc}{\mathcal{T}}
\newcommand{\Sc}{\mathcal{S¼}}
\newcommand{\Mc}{\mathcal{M}}
\newcommand{\Ac}{\mathcal{A}}
\newcommand{\Lc}{\mathcal{L}}
\newcommand{\espan}{\operatorname{span}}
\DeclareMathOperator*{\esup}{ess\,sup}
\DeclareMathOperator*{\einf}{ess\,inf}
\begin{document}

\title{\bf Riesz bases associated with regular representations of semidirect product groups}
\author{
{\bf Antonio G. Garc\'{\i}a}\thanks{E-mail:\texttt{agarcia@math.uc3m.es}}
{\bf\,\, and \, Gerardo P\'erez-Villal\'on}\thanks{E-mail:\texttt{gperez@euitt.upm.es}}
}
\date{}
\maketitle
\begin{itemize}
\item[*] Departamento de Matem\'aticas, Universidad Carlos III de Madrid,
 Avda. de la Universidad 30, 28911 Legan\'es-Madrid, Spain.
 \item[\dag] Departamento de Matem\'aticas Aplicada a las Tecnolog\'{\i}as de la Informaci\'on y las Comunicaciones, E.T.S.I.S.T., Universidad Polit\'ecnica de Madrid,
 Carret. Valencia km.~7, 28030 Madrid, Spain.
\end{itemize}
\begin{abstract}
This work is devoted to the study of  Bessel and Riesz systems of the type  $\big\{L_{\gamma}\mathsf{f}\big\}_{\gamma\in \Gamma}$ obtained from the action of the left regular representation $L_{\gamma}$ of a discrete non abelian group $\Gamma$ which is  a semidirect product, on a function $\mathsf{f}\in \ell^2(\Gamma)$. The main features about these systems can be conveniently studied  by means of a simple matrix-valued function $\mathbf{F}(\xi)$. These  systems allow to derive  sampling results in principal $\Gamma$-invariant spaces, i.e., spaces obtained from the action of the group $\Gamma$ on a element of a Hilbert space. Since the systems $\big\{L_{\gamma}\mathsf{f}\big\}_{\gamma\in \Gamma}$ are closely related to convolution operators, a connection with $C^*$-algebras  is also established.\end{abstract}
{\bf Keywords}: Semidirect product of groups; left regular representation of a group; dual Riesz bases; sampling expansions.

\noindent{\bf AMS}: 42C15; 20H15; 94A20.
\section{Introduction}

This work is devoted to the study of a characterization as Riesz bases, together with some sampling applications, of systems  $\big\{L_{\gamma}\mathsf{f}\big\}_{\gamma\in \Gamma}$ obtained from the left regular representation of a discrete non abelian group $\Gamma$, that is, $L_{\gamma}\mathsf{f}(\eta):= \mathsf{f}(\gamma^{-1}\eta)$, $\eta, \gamma\in \Gamma$, where $\mathsf{f}$ denotes a fixed element in the  Hilbert space $\ell^2(\Gamma)$. Throughout the paper the group $\Gamma:=N\rtimes_\sigma H$ is the semidirect product of two groups:  a discrete abelian group $N$ and  a finite group $H$; the subscript $\sigma$ denotes the action of the group $H$ on the group $N$. Some important examples of non abelian groups such as dihedral groups, infinite  dihedral  group or crystallographic groups are semidirect products  with these characteristics. 

In addition to the intrinsic importance of  the left regular representation 
\,$ \gamma \mapsto L_\gamma \in \mathcal{U}\big(\ell^2(\Gamma)\big)$ in representation theory of groups,
 the systems $\big\{L_{\gamma}\mathsf{f}\big\}_{\gamma\in \Gamma}$ arising from the left regular representation of $\Gamma$ are relevant in applications; for instance they appear in sampling theory. In fact, in the present paper we deal with two types of samples where these systems have an important role: 
 
 Firstly, given a unitary representation $\Gamma \ni \gamma \mapsto U(\gamma)\in \mathcal{U}(\Hc)$ of the group $\Gamma$ on a separable Hilbert space $\Hc$, for a fixed $\varphi \in \Hc$ we consider the subspace of $\Hc$
\[
\Ac_\varphi= \Big\{ \sum_{\gamma\in \Gamma} \mathsf{a}(\gamma) U(\gamma)\varphi \,:\,\,
\mathsf{a}=\{\mathsf{a}(\gamma)\}_{\gammaÊ\in \Gamma} \in \ell^2(\Gamma)\Big\}
\]
For a fixed $\psi \in \Hc$, which does not necessarily belong  to $\Ac_\varphi$, we can define for each $f\in \Ac_\varphi$ its samples 
\[
\Lc_\psi f(\gamma):=\langle f, U(\gamma)\psi\rangle_\Hc\,, \quad \gamma\in \Gamma\,. 
\]
These samples give average sampling in classical shift-invariant subspaces of $L^2(\RR)$ (see, for instance, Refs.~\cite{aldroubi:05,hector:14,garcia:09,lei:97}). As we will see in Section \ref{section4}, there exists 
$\mathsf{f}_\psi \in \ell^2(\Gamma)$  such that, for each $f\in \Ac_\varphi$, we get $\Lc_\psi f(\gamma)=\langle \mathsf{a}, L_\gamma\mathsf{f}_\psi \rangle_{\ell^2(\Gamma)}$, $\gamma\in \Gamma$, being $\mathsf{a} \in \ell^2(\Gamma)$ the coefficients sequence of $f\in \Ac_\varphi$. 

Secondly, when $\Hc=L^2(\RR^d)$, for a fixed point $p\in \RR^d$ we consider, for any $f\in \Ac_\varphi$ the samples 
\[
\Lc_pf(\gamma):=\big[U(\gamma^{-1})f\big](p)\,, \quad \gamma \in \Gamma\,.
\]
Again, there exists $\mathsf{f}_p \in \ell^2(\Gamma)$  such that, for each $f\in \Ac_\varphi$, we obtain the expression for these samples $\Lc_p f(\gamma)=\langle \mathsf{a}, L_\gamma\mathsf{f}_p\rangle_{\ell^2(\Gamma)}$, $\gamma\in \Gamma$. This situation englobes the case when we are dealing with pointwise samples in  classical shift-invariant subspaces of $L^2(\RR)$ \cite{garcia:09,kim:08,lei:97,selvan:15,zhou:99}. 

Needless to say that a feasible characterization of the system $\big\{L_{\gamma}\mathsf{f}\big\}_{\gamma\in \Gamma}$ as a Riesz basis for $\ell^2(\Gamma)$ along with the search of its dual Riesz basis will play a crucial role in obtaining an interpolatory  sampling formula allowing the recovery of any $f\in \Ac_\varphi$ from the given data sampling $\{\Lc_\psi f(\gamma)\}_{\gamma \in \Gamma}$ or $\{\Lc_pf(\gamma)\}_{\gamma \in \Gamma}$. Some $\Gamma$-invariant spaces $\mathcal{A}_{\varphi}$ of special relevance  are those appearing in composite wavelet theory. These wavelets allow many more locations, scales and directions than the classical ones. They have been studied in the last few years; see, for instance, Refs.~\cite{gonzalez:11,guo:06,mac:11,manning:12}.

\medskip

In order to get a suitable characterization of when the system $\big\{L_{\gamma}\mathsf{f}\big\}_{\gamma\in \Gamma}$ is a Riesz basis for $\ell^2(\Gamma)$, we briefly describe the mathematical techniques used in this paper.
Since, for $\mathsf{a}, \mathsf{f}\in \ell^2(\Gamma)$, we have
\[
\sum_{\eta\in \Gamma} \mathsf{a}(\eta)L_{\eta}\mathsf{f}=\mathsf{a}\ast \mathsf{f} \quad \text{and }\quad 
\langle \mathsf{a}, L_{\gamma}\mathsf{f} \rangle_{\ell^2(\Gamma)}=\big( \mathsf{a} \ast \mathsf{f}^*\big)(\gamma)\,,
\]
where $\mathsf{f}^*$ denotes the involution in $\ell^2(\Gamma)$ of $\mathsf{f}$, we can use techniques of linear time-invariant (LTI) systems. In fact, the given characterization will be described in terms of a matrix-valued function $\mathbf{F}(\xi)$ which turns out to be the transfer matrix of a multi-input multi-output (MIMO) system. Although, our group $\Gamma=N\rtimes_\sigma H$ is not abelian and a classical Fourier analysis  is not directly applicable, the MIMO system formalism will allow us to make use of the Fourier transform on the locally compact abelian (LCA) group $N$; in fact, $\mathbf{F}(\xi)$ is defined for $\xi$ in $\widehat{N}$, the dual group of characters $\xi$ of  $N$.

\medskip

As the above equalities show, the study of the systems $\big\{L_{\gamma}\mathsf{f}\big\}_{\gamma\in \Gamma}$ is related to convolution algebras on $\Gamma$, and specially,  to the representation of the group $C^*$-algebra of $\Gamma$, $C^*(\Gamma)$, given in Ref.~\cite{taylor:89}. In the last section we show this relationship and, in so doing, we provide a convolution $C^*$-algebra larger than $C^*(\Gamma)$ and suitable for the present context. 

\medskip

The paper is organized as follows: Section 2 provides the mathematical setting needed throughout the paper giving the keys for different approaches, together with some lemmata used in the sequel. Section \ref{section3} includes the main theoretical results in the paper: A characterization of  when  $\big\{L_{\gamma}\mathsf{f}\big\}_{\gamma\in \Gamma}$ is a Bessel sequence or a Riesz basis  for $\ell^2(\Gamma)$ is respectively proved in Theorems \ref{bessel} and \ref{riesz}; in particular the orthonormal basis case is considered (Corollary \ref{ort}). The dual  Riesz basis of $\big\{L_{\gamma}\mathsf{f}\big\}_{\gamma\in \Gamma}$, which has the same form $\big\{L_{\gamma}\mathsf{g}\big\}_{\gamma\in \Gamma}$, is also obtained (Corollary \ref{dual}). Closing the section, an example of Riesz bases associated to the infinite dihedral group $D_\infty$ is proposed.  Section \ref{section4} is devoted to a sampling application of the results in Section \ref{section3}; thus, an abstract sampling result  is obtained (Theorem \ref{sampriesz}) in a principal $U$-invariant subspace of a Hilbert space $\Hc$. An example using crystallographic groups illustrates the sampling results, where we consider pointwise samples as well as average samples. Finally, in Section \ref{section5} a $C^*$-algebra connection is also exhibited.

\section{The mathematical setting}
\label{section2}
The aim of this section is twofold: Firstly, to provide a brief on the needed mathematical preliminaries and, secondly, to establish the mathematical setting which will give us the keys and tools to get the main aim in the paper, i.e., a suitable characterization of the system 
$\big\{L_{\gamma}\mathsf{f}\big\}_{\gamma\in \Gamma}$ as a Riesz basis for $\ell^2(\Gamma)$.

\medskip

We begin stating the main facts concerning the group $\Gamma$ as a semidirect product of groups. Let $(N,+)$ be a discrete abelian group,  $(H,\cdot)$ a finite group of order $\kappa$, and a homomorphism $\sigma: H \mapsto \text{Aut}(N)$ referred as the action of the group $H$ on the group $N$. Its {\em semidirect product} 
$\Gamma:=N\rtimes_\sigma H$ is the group whose elements are the pairs $(n,h)\in N\times H$ with multiplication rule 
\[
(n,h)(m,l):=\big( n\, +\, \sigma(h)(m), h\, l \big)\,, \quad n,m\in N\,\,\text{and}\,\, h,l\in H\,. 
\]
In particular, the identity element in $\Gamma$ is $e_\Gamma=(0_N,1_H)$ and $(n,h)^{-1}=\big(-\sigma(h^{-1})(n),h^{-1}\big)$, \,$(n,h)\in \Gamma$. Notice that, unless $\sigma(h)$ equals the identity for $h\in H$, the group $\Gamma$ is not abelian.

\medskip

An important example of semidirect product of groups that will be used in this paper is the {\em crystallographic group} $\Gamma_{\Mc,H}=\Mc \ZZ^d \rtimes_{\sigma}H$, where $\Mc$ is a  non-singular $d\times d$ matrix and $H$ is a finite subgroup of $O(d)$, the orthogonal group of order $d$, such that $A(\Mc\ZZ^d) = \Mc\ZZ^d$ for all $A\in H$. Here $\sigma(A)x=Ax$ for $A\in H$ and $x\in \RR^d$. The {\em infinite dihedral group} $D_\infty:=\ZZ \rtimes_\sigma \ZZ_2$, where $\sigma(1)(n)=n$ and $\sigma(-1)(n)=-n$ for each $n\in \ZZ$, is a unidimensional crystallographic group.

\medskip

Throughout the paper we denote by greek letters $\gamma, \eta,\dots$ or as $(n,h), (m,l),\ldots$ the elements in $\Gamma$. The {\em left regular representation} of the group $\Gamma$ on $\ell^2(\Gamma)$ is given by 
\[
L_{\gamma}\mathsf{f}(\eta):= \mathsf{f}(\gamma^{-1}\eta),\quad \eta, \gamma\in \Gamma\,\text{ andÊ}\,\, \mathsf{f}\in \ell^2(\Gamma).
\]
Note that, for each $\mathsf{f}\in \ell^2(\Gamma)$, the  {\em synthesis operator} $\Lambda_{\mathsf{f}}$ of the system $\big\{L_{\gamma}\mathsf{f}\big\}_{\gamma\in \Gamma}$  is a convolution operator. Indeed, for any $\mathsf{a}\in \ell^2(\Gamma)$
 \[
\Lambda_{\mathsf{f}}\mathsf{a}(\gamma):=\sum_{\eta\in \Gamma} \mathsf{a}(\eta)L_{\eta}\mathsf{f}(\gamma)= \sum_{\eta\in \Gamma} \mathsf{a}(\eta)\mathsf{f}(\eta^{-1}\gamma)=\big(\mathsf{a}\ast \mathsf{f}\big)(\gamma)\,,\,\, \gamma \in \Gamma\,.
 \]
 Recall that  the above definition gives a bounded linear operator $\Lambda_{\mathsf{f}}:\ell^2(\Gamma)\mapsto \ell^2(\Gamma)$ if and only if the system $\big\{L_{\gamma}\mathsf{f}\big\}_{\gamma\in \Gamma}$ is a Bessel sequence \cite[Theorem 3.2.3]{ole:16} (see also Theorem \ref{bessel} infra).
Our study is based on the following representation of  $\Lambda_{\mathsf{f}}\mathsf{a}$:
\begin{lema}
Given $\mathsf{f}, \mathsf{a} \in \ell^2(\Gamma)$, $\Lambda_{\mathsf{f}}\mathsf{a}$ is represented as:
 \begin{equation}
 \label{sintesis}
 \Lambda_{\mathsf{f}}\mathsf{a}(n,h)= \sum_{l\in H} \big(\mathsf{a}_{l} \ast_{_N} \mathsf{f}_{h,l}\big)(n)\,,\quad (n,h)\in \Gamma\,,
\end{equation}
where $\mathsf{a}_{l}(n):=\mathsf{a}(n,l)$, $\mathsf{f}_{h,l}(n):=\mathsf{f}[(0,l)^{-1}(n,h)]$,
and $\ast_{_N}$ denotes the convolution on the abelian group $N$, i.e., $\big(a\ast_{_N} b\big)(n):=\sum_{m\in N}a(m)\,b(n-m)$.  
\end{lema}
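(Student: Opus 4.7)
The plan is to start from the convolution identity already noted just before the statement, namely $\Lambda_{\mathsf{f}}\mathsf{a}(\gamma)=(\mathsf{a}\ast\mathsf{f})(\gamma)=\sum_{\eta\in\Gamma}\mathsf{a}(\eta)\,\mathsf{f}(\eta^{-1}\gamma)$, and to parametrize the sum via the bijection $\Gamma\cong N\times H$, writing $\gamma=(n,h)$ and $\eta=(m,l)$. Splitting the sum as an outer sum over $l\in H$ (a finite sum, so no convergence subtleties) and an inner sum over $m\in N$, the goal is to recognize the inner sum as a convolution on the abelian group $N$ of the right two factors.

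The single calculation that makes the lemma work is the evaluation of $\eta^{-1}\gamma$ in the semidirect product. Using the formulas $(m,l)^{-1}=\bigl(-\sigma(l^{-1})(m),l^{-1}\bigr)$ and the multiplication rule recalled earlier, one obtains
\[
(m,l)^{-1}(n,h)=\bigl(\sigma(l^{-1})(n-m),\,l^{-1}h\bigr).
\]
A parallel computation with $m$ replaced by $0$ gives
\[
(0,l)^{-1}(n-m,h)=\bigl(\sigma(l^{-1})(n-m),\,l^{-1}h\bigr),
\]
so the two agree and hence $\mathsf{f}\bigl((m,l)^{-1}(n,h)\bigr)=\mathsf{f}_{h,l}(n-m)$ with $\mathsf{f}_{h,l}$ as defined in the statement. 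Combined with $\mathsf{a}(m,l)=\mathsf{a}_{l}(m)$, the double sum becomes
\[
\Lambda_{\mathsf{f}}\mathsf{a}(n,h)=\sum_{l\in H}\sum_{m\in N}\mathsf{a}_{l}(m)\,\mathsf{f}_{h,l}(n-m)=\sum_{l\in H}\bigl(\mathsf{a}_{l}\ast_{_N}\mathsf{f}_{h,l}\bigr)(n),
\]
which is \eqref{sintesis}.

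There is essentially no conceptual obstacle; the only point requiring care is the bookkeeping of the semidirect-product inverse and the action $\sigma(l^{-1})$. Concerning convergence, the rearrangement is harmless because the outer sum over $H$ is finite, and for each fixed $h,l$ the sequences $\mathsf{a}_{l}$ and $\mathsf{f}_{h,l}$ lie in $\ell^{2}(N)$ so that the convolution $\mathsf{a}_{l}\ast_{_N}\mathsf{f}_{h,l}$ is well defined pointwise on $N$; if one wants to be fully rigorous one may first verify the identity for finitely supported $\mathsf{a}$ and $\mathsf{f}$ and then pass to the general case using that both sides define the same pointwise value at each $(n,h)\in\Gamma$.
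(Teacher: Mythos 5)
Your proof is correct and follows essentially the same route as the paper: both split the convolution sum over $\Gamma\cong N\times H$ and reduce the lemma to the identity $\mathsf{f}\bigl[(m,l)^{-1}(n,h)\bigr]=\mathsf{f}\bigl[(0,l)^{-1}(n-m,h)\bigr]$, which the paper obtains from the factorization $(m,l)^{-1}=(0,l)^{-1}(-m,1)$ and you obtain by computing both sides directly. Your added remarks on absolute convergence and the finiteness of $H$ are correct and harmless.
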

\begin{proof}
Having in mind that $(m,l)^{-1}=(0,l)^{-1}(-m,1)$, we get 
\[
\Lambda_{\mathsf{f}}\mathsf{a}(n,h)=\sum_{l\in H} \sum_{m\in N}\mathsf{a}(m,l)\,\mathsf{f}[(m,l)^{-1}(n,h)]=\sum_{l\in H} \sum_{m\in N}\mathsf{a}(m,l)\,\mathsf{f}[(0,l)^{-1}
  (n-m,h)],
  \]
and representation \eqref{sintesis} holds.
\end{proof}
According to expression \eqref{sintesis}, the  operator $\Lambda_{\mathsf{f}}$ can be seen as
a linear time-invariant system:
\begin{equation}
\label{mimo}
[\mathsf{a}_{l}]_{l\in H}\in \ell^2_\kappa(N) \longmapsto 
\Big[\sum_{l\in H} \mathsf{a}_{l} \ast_{_{N}} \mathsf{f}_{h,l}\Big]_{h\in H}
\end{equation}
where $\ell^2_\kappa(N):=\ell^2(N)\times\dots \times \ell^2(N)$ ($\kappa$ times).
In signal processing jargon this type of system is called  a multi-input, multi-output (MIMO) linear time-invariant system; it can be effectively analyzed by using the Fourier transform. 

\medskip

Since $N$ is a discrete abelian group, we can use the Fourier transform on $N$ defined  by   
\[\widehat{\mathsf{a}}(\xi)=\sum_{n\in N} \mathsf{a}(n) \langle -n,\xi \rangle,\quad \xi \in \widehat{N},\]  
for any $\mathsf{a}\in \ell^1(N)$, and extended to $\ell^2(N)$ as a unitary operator between $\ell^2(N)$ and $L^2(\widehat{N})$ where $\widehat{N}$ denotes the dual group of characters (see, for instance, Ref.\cite{folland:95} for the details).

Next lemma will be needed in taking the Fourier transform in Eq.~\eqref{sintesis}. It also gives a condition so that the output in \eqref{mimo}  belongs to 
$\ell^2_\kappa(N)$.
 \begin{lema}
 \label{conv}
Let $\mathsf{a},\mathsf{b} \in \ell^2(N)$ such that the product \,$\widehat{\mathsf{a}}(\xi)\,\widehat{\mathsf{b}}(\xi)\in L^2(\widehat{N})$. Then the convolution 
$\mathsf{a}\ast_{_{N}} \mathsf{b}\in \ell^2(N)$  and
 \[
\big( \widehat{\mathsf{a}\ast_{_{N}} \mathsf{b}}\big)(\xi)=\widehat{\mathsf{a}}(\xi)\,\widehat{\mathsf{b}}(\xi),\quad \text{a.e.}\,\, \xi\in \widehat{N}\,.
 \] 
 \end{lema}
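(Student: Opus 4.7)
The plan is to identify $\mathsf{a}\ast_{_N}\mathsf{b}$ pointwise as the inverse Fourier transform of $\widehat{\mathsf{a}}\,\widehat{\mathsf{b}}$, and then, under the hypothesis $\widehat{\mathsf{a}}\widehat{\mathsf{b}}\in L^2(\widehat{N})$, deduce membership in $\ell^2(N)$ together with the Fourier identity from Plancherel on the LCA group $N$.

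First, for each $n\in N$ the Cauchy--Schwarz inequality in $\ell^2(N)$ gives $\sum_{m\in N}|\mathsf{a}(m)\mathsf{b}(n-m)|\leq \|\mathsf{a}\|_2\|\mathsf{b}\|_2$, so that $(\mathsf{a}\ast_{_N}\mathsf{b})(n)$ is well-defined pointwise as a bounded function of $n$. Rewriting it as an $\ell^2(N)$ inner product $\langle \mathsf{a},\overline{\mathsf{b}(n-\cdot)}\rangle_{\ell^2(N)}$ and applying Plancherel on $N$ yields
\[
(\mathsf{a}\ast_{_N}\mathsf{b})(n)=\int_{\widehat{N}}\widehat{\mathsf{a}}(\xi)\,\overline{\widehat{\overline{\mathsf{b}(n-\cdot)}}(\xi)}\,d\xi.
\]
A short change of variables, together with the identity $\overline{\langle k,\xi\rangle}=\langle -k,\xi\rangle$ valid for every character $\xi\in \widehat{N}$, gives $\widehat{\overline{\mathsf{b}(n-\cdot)}}(\xi)=\langle -n,\xi\rangle\,\overline{\widehat{\mathsf{b}}(\xi)}$, hence
\[
(\mathsf{a}\ast_{_N}\mathsf{b})(n)=\int_{\widehat{N}}\widehat{\mathsf{a}}(\xi)\,\widehat{\mathsf{b}}(\xi)\,\langle n,\xi\rangle\,d\xi,\qquad n\in N.
\]
Since $N$ is discrete, $\widehat{N}$ is compact, and therefore $L^2(\widehat{N})\subset L^1(\widehat{N})$; in particular $\widehat{\mathsf{a}}\widehat{\mathsf{b}}\in L^1(\widehat{N})$ by Cauchy--Schwarz on $\widehat{N}$, so the right-hand side is absolutely convergent and identifies $(\mathsf{a}\ast_{_N}\mathsf{b})(n)$ with the pointwise value at $n$ of the inverse Fourier transform of $\widehat{\mathsf{a}}\widehat{\mathsf{b}}$.

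Next I would invoke the hypothesis: since $\widehat{\mathsf{a}}\widehat{\mathsf{b}}\in L^2(\widehat{N})$, Plancherel produces a unique $c\in \ell^2(N)$ with $\widehat{c}=\widehat{\mathsf{a}}\widehat{\mathsf{b}}$ a.e.\ on $\widehat{N}$, and testing $c$ against the characters $n\mapsto \langle n,\xi\rangle$ shows that $c(n)$ equals exactly the integral above. Comparison therefore yields $\mathsf{a}\ast_{_N}\mathsf{b}=c\in \ell^2(N)$ and $\widehat{\mathsf{a}\ast_{_N}\mathsf{b}}(\xi)=\widehat{\mathsf{a}}(\xi)\widehat{\mathsf{b}}(\xi)$ a.e., as claimed. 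The only delicate step is the bookkeeping of Fourier conventions (signs in characters, and the behaviour of the transform under translation and conjugation/reflection) leading to the identity for $\widehat{\overline{\mathsf{b}(n-\cdot)}}$; once that is settled the rest is a direct application of Plancherel together with the compactness of $\widehat{N}$.
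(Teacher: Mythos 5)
Your argument is correct and follows essentially the same route as the paper: both express $(\mathsf{a}\ast_{_N}\mathsf{b})(n)$ via Plancherel as the integral of $\widehat{\mathsf{a}}\,\widehat{\mathsf{b}}$ against the character $\langle n,\cdot\rangle$, and then use that the characters form an orthonormal basis of $L^2(\widehat{N})$ (your "testing $c$ against the characters") to identify these values as the Fourier coefficients of the $L^2$ function $\widehat{\mathsf{a}}\,\widehat{\mathsf{b}}$. The bookkeeping of the translation/conjugation identity $\widehat{\overline{\mathsf{b}(n-\cdot)}}(\xi)=\langle -n,\xi\rangle\,\overline{\widehat{\mathsf{b}}(\xi)}$ matches the paper's use of $\widetilde{\mathsf{b}}(\cdot-n)$, so no gap remains.
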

 \begin{proof}
 By using Plancherel theorem \cite[Theorem 4.25]{folland:95} and denoting $\widetilde{\mathsf{b}}(n)=\overline{\mathsf{b}(-n)}$, we obtain
 \[
 \begin{split}
 \big(\mathsf{a}\ast_{_{N}} \mathsf{b}\big)(n)&=\sum_{m\in N}\mathsf{a}(m)\,\mathsf{b}(n-m)=\big\langle \mathsf{\mathsf{a}},\widetilde{\mathsf{b}}(\cdot-n)\big\rangle_{\ell^2(N)}=\big\langle \widehat{\mathsf{a}},\widehat{\widetilde{\mathsf{b}}(\cdot-n)}\big\rangle_{\ell^2(\widehat{N})}\\ 
 &=\int_{\widehat{N}}  \widehat{\mathsf{a}}(\xi)\,\overline{\widehat{\widetilde{\mathsf{b}}}(\xi)} \, \overline{\langle -n,\xi \rangle}d\mu_{\widehat{N}}(\xi)=\int_{\widehat{N}}  \widehat{\mathsf{a}}(\xi)\,\widehat{\mathsf{b}}(\xi) \, \overline{\langle -n,\xi \rangle}d\mu_{\widehat{N}}(\xi)\,.
 \end{split}
 \]
 Since $\big\{\langle-n,\xi \rangle\big\}_{n\in N}$ is an orthonormal basis for $L^2(\widehat{N})$ 
\cite[Theorem 4.26]{folland:95} and we have assumed that $\widehat{\mathsf{a}}(\xi)\,\widehat{\mathsf{b}}(\xi)\in L^2(\widehat{N})$, we obtain that $\mathsf{a}\ast_{_{N}} \mathsf{b}\in \ell^2(N)$ and
\[
\widehat{\mathsf{a}}(\xi)\,\widehat{\mathsf{b}}(\xi)=\sum_{n\in N} \big(\mathsf{a}\ast_{_{N}} \mathsf{b}\big)(n)\langle -n,\xi \rangle,\quad \quad \text{a.e.}\,\, \xi\in \widehat{N}\,.
\]
\end{proof}
By taking the $N$-Fourier transform in the second term of expression \eqref{sintesis} we obtain the so called {\em transfer matrix}  of the MIMO system \eqref{mimo}. This motivates the following definition:
\begin{dfn}
\label{defF}
For each $\mathsf{f} \in \ell^2(\Gamma)$ we introduce its associated transfer matrix as the $\kappa\times \kappa$ matrix-valued function $\mathbf{F}$ defined on 
$\widehat{N}$ as 
\begin{equation}
\label{F}
\mathbf{F}(\xi)=\big[\,\widehat{\mathsf{f}}_{h,l}(\xi)\,\big]_{h,l\in H}\quad \text{ where }\,\, \mathsf{f}_{h,l}(n)=\mathsf{f}[(0,l)^{-1}(n,h)]\,, \,\,n\in N\,.
\end{equation}
\end{dfn}

\medskip

The {\em involution} in $\ell^2(\Gamma)$ and in $\ell^2(N)$ are denoted, respectively, by  
\[
\mathsf{f}^*(\gamma)=\overline{\mathsf{f}(\gamma^{-1}})\,, \,\, \gamma \in \Gamma \quad \text{and}\quad \widetilde{\mathsf{f}}_{h,l}(n)=\overline{\mathsf{f}_{h,l}(-n)}\,, \,\, n\in N\,.
\]
The role of the conjugate transpose matrix-valued function $\mathbf{F}^*(\xi)$ is also well understood realizing that it
is the transfer matrix  of the  system 
\[
[\mathsf{a}_{l}]_{l\in H} \longmapsto \Big[\sum_{l\in H} \mathsf{a}_{l} \ast_{_{N}} \widetilde{\mathsf{f}}_{l,h}\Big]_{h\in H}
\]
which represents the {\em analysis operator}  of the sequence $\big\{L_{\gamma}\mathsf{f}\big\}_{\gamma\in \Gamma}$. 
Namely, for $\mathsf{f}, \mathsf{a}$ in $\ell^2(\Gamma)$, we have
\begin{equation}
\label{analisis}
\Ac_{\mathsf{f}}(n,h):=\big\langle \mathsf{a}, L_{(n,h)}\mathsf{f} \big\rangle_{\ell^2(\Gamma)}= \big(\mathsf{a} \ast \mathsf{f}^*\big)(n,h)=
 \sum_{l\in H } \big(\mathsf{a}_{l}\ast_{_{N}} \widetilde{\mathsf{f}}_{l,h}\big)(n)\,.
 \end{equation}
Indeed, equality
\[
\sum_{l\in H } \sum_{m\in N}\mathsf{a}(m,l)\overline{\mathsf{f}[(n,h)^{-1}(m,l)]}
 = \sum_{l\in H } \sum_{m\in N}\mathsf{a}(m,l)\overline{\mathsf{f}[(0,h)^{-1}(m-n,l)]}
\]
yields \eqref{analisis}. Recall that the analysis operator is the adjoint operator of the synthesis operator  \cite[Lemma 3.2.1]{ole:16}; in other words, whenever the system 
$\{L_\gamma \mathsf{f}\}_{\gamma \in \Gamma}$ is a Bessel sequence for $\ell^2(\Gamma)$, operator $\Ac_{\mathsf{f}}$ in \eqref{analisis} is the adjoint operator 
of $\Lambda_{\mathsf{f}}$, i.e., $\Ac_{\mathsf{f}}=\Lambda_{\mathsf{f}}^*$.

In our context, we will need the transform $\mathcal{T}_\Gamma$ given in the following lemma, where $L^2_\kappa(\widehat{N})$ denotes the product Hilbert space $L^2(\widehat{N})\times \dots \times L^2(\widehat{N})$ ($\kappa$ times):

\begin{lema}
\label{T} 
The linear map $\mathcal{T}_\Gamma:\ell^2(\Gamma)\rightarrow L^2_\kappa(\widehat{N})$ defined by 
$\mathcal{T}_\Gamma \mathsf{a}:=[\,\widehat{\mathsf{a}}_{h}\,]_{h\in H}$, where $\mathsf{a}_{h}(n)=\mathsf{a}(n,h)$, $(n,h)\in \Gamma$,  is a unitary operator.
\end{lema}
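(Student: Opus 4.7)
The strategy is to factor $\mathcal{T}_\Gamma$ as a composition of two obvious unitaries: first the natural identification $\ell^2(\Gamma) \cong \ell^2_\kappa(N)$ that splits a sequence by the $H$-coordinate, and then a coordinate-wise Fourier transform on the discrete abelian group $N$.

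First I would define $\mathcal{S}:\ell^2(\Gamma) \to \ell^2_\kappa(N)$ by $\mathcal{S}\mathsf{a}:=[\mathsf{a}_h]_{h\in H}$, where $\mathsf{a}_h(n)=\mathsf{a}(n,h)$. Since the underlying set of $\Gamma=N\rtimes_\sigma H$ is the Cartesian product $N\times H$ and $H$ is finite, Fubini/Tonelli gives
\[
\|\mathsf{a}\|^2_{\ell^2(\Gamma)}=\sum_{(n,h)\in N\times H}|\mathsf{a}(n,h)|^2=\sum_{h\in H}\sum_{n\in N}|\mathsf{a}_h(n)|^2=\sum_{h\in H}\|\mathsf{a}_h\|^2_{\ell^2(N)}=\|\mathcal{S}\mathsf{a}\|^2_{\ell^2_\kappa(N)}\,,
\]
so $\mathcal{S}$ is an isometry; it is clearly surjective since any tuple $[\mathsf{b}_h]_{h\in H}\in \ell^2_\kappa(N)$ is the image of $\mathsf{a}(n,h):=\mathsf{b}_h(n)$. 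Hence $\mathcal{S}$ is unitary.

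Next I would invoke the Plancherel theorem on the discrete LCA group $N$ cited earlier (Folland, Theorem 4.25): the Fourier transform $\mathcal{F}_N:\ell^2(N)\to L^2(\widehat{N})$ is unitary. Applying it in each of the $\kappa$ coordinates, the map $\mathcal{F}_N^{\,\kappa}:\ell^2_\kappa(N)\to L^2_\kappa(\widehat{N})$ sending $[\mathsf{a}_h]_{h\in H}\mapsto [\widehat{\mathsf{a}}_h]_{h\in H}$ is unitary, because $\sum_{h\in H}\|\mathsf{a}_h\|^2_{\ell^2(N)}=\sum_{h\in H}\|\widehat{\mathsf{a}}_h\|^2_{L^2(\widehat{N})}$, and surjectivity follows from the surjectivity of each $\mathcal{F}_N$.

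Finally I would observe that $\mathcal{T}_\Gamma=\mathcal{F}_N^{\,\kappa}\circ \mathcal{S}$ by construction, so $\mathcal{T}_\Gamma$ is a composition of unitary operators and hence unitary. There is no real obstacle here; the only thing to be careful about is the order of the two factorizations and the fact that $H$ being finite is what allows the interchange of the two sums and guarantees the product space $L^2_\kappa(\widehat{N})$ is a Hilbert space with the natural norm used above.
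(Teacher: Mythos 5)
Your proposal is correct and is essentially the paper's own argument: the paper likewise verifies surjectivity from the unitarity of the $N$-Fourier transform and isometry via $\langle \mathsf{a},\mathsf{b}\rangle_{\ell^2(\Gamma)}=\sum_{h\in H}\langle \mathsf{a}_h,\mathsf{b}_h\rangle_{\ell^2(N)}=\sum_{h\in H}\langle \widehat{\mathsf{a}}_h,\widehat{\mathsf{b}}_h\rangle_{L^2(\widehat{N})}$, which is exactly your two-step factorization written out in one line. Your explicit splitting into $\mathcal{S}$ followed by the coordinate-wise Plancherel map is just a slightly more structured presentation of the same proof.
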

\begin{proof}
The map $\mathcal{T}_\Gamma$  is surjective since the $N$-Fourier transform is a unitary operator between $\ell^2(N)$ and $L^2(\widehat{N})$. It is also an isometry since, for each $\mathsf{a},\mathsf{b}\in \ell^2(\Gamma)$, we have 
\[
\qquad \langle \mathsf{a},\mathsf{b}\rangle_{\ell^2(\Gamma)}=\sum_{h\in H}\langle \mathsf{a}_h,\mathsf{b}_h\rangle_{\ell^2(N)}=\sum_{h\in H}\langle \, \widehat{\mathsf{a}}_{h},\widehat{\mathsf{b}}_{h}\, \rangle_{L^2(\widehat{N})}=\big\langle \,\mathcal{T}_\Gamma\mathsf{a}\, , \, \mathcal{T}_\Gamma\mathsf{b}\, \big\rangle_{L^2_\kappa(\widehat{N})}\,.
\]
\end{proof}
The above lemma, related to the abstract version of the Zak transform (see, for instance, Refs.\cite{barbieri:15b, hernandez:10}), says that any $\mathsf{a}\in \ell^2(\Gamma)$ is completely determined by the Fourier transform of its $\kappa$ {\em phases} $\mathsf{a}_{h}$, $h\in H$.

\medskip
 
With a view to built the matrix-valued function $\mathbf{F}(\xi)$ and the vector-valued function $\mathcal{T}_{\Gamma}\mathsf{a}(\xi)$, indexed by the elements of $H$, we order the $\kappa$ elements of $H$ such that the first element is $1_{H}$, the identity element of $H$. Thus the first column of $\mathbf{F}(\xi)$ is $\mathcal{T}_\Gamma\mathsf{f}(\xi)$. Notice also that its $l$-column is $\mathcal{T}_\Gamma L_{(0_N,l)}\mathsf{f}(\xi)$. Hence, for $\xi \in \widehat{N}$ matrix $\mathbf{F}(\xi)$ is a redundant matrix, similar to what happens with the modulation matrix in wavelet or in filter bank theory. 

\medskip
 
In next result, we obtain a representation for the analysis and synthesis operators, Eqs. \eqref{sintesis} and \eqref{analisis} respectively, in the $\mathcal{T}_\Gamma$ domain:
 
\begin{teo}
\label{Tdomain} 
Assume that  $\mathsf{f},\mathsf{a} \in \ell^2(N)$ and that  the products $\widehat{\mathsf{a}}_{l}(\xi)\,\widehat{\mathsf{f}}_{h,h'}(\xi)\in L^2(\widehat{N})$\, for all $l,h,h'\in H$. Then
\[
\mathcal{T}_\Gamma \Lambda_{\mathsf{f}}\mathsf{a}(\xi) = \mathbf{F}(\xi) \, \mathcal{T}_\Gamma\mathsf{a}(\xi)\quad \text{and}\quad \mathcal{T}_\Gamma\Ac_{\mathsf{f}}\mathsf{a}(\xi)= \mathbf{F}^*(\xi) \, \mathcal{T}_\Gamma \mathsf{a}(\xi),\quad \text{a.e.}\,\,\, \xi\in \widehat{N}\,.
\]
Besides, on the assumption that  $\mathsf{b}=\mathsf{a}\ast \mathsf{f}$ then
\begin{equation}
\label{pro}
\mathbf{B}(\xi)=\mathbf{F}(\xi)\mathbf{A}(\xi),\quad \text{a.e.}\,\,\, \xi\in \widehat{N}\,,
\end{equation}
where $\mathbf{A}(\xi)$ and $\mathbf{B}(\xi)$ are the transfer matrices associated to $\mathsf{a}$ and $\mathsf{b}$ defined in \eqref{F}.
 \end{teo}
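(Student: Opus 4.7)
The plan is to apply the $N$-Fourier transform componentwise to the decompositions \eqref{sintesis} and \eqref{analisis}, invoking Lemma \ref{conv} termwise on each of the finitely many summands indexed by $l\in H$. The standing hypothesis $\widehat{\mathsf{a}}_l(\xi)\,\widehat{\mathsf{f}}_{h,h'}(\xi)\in L^2(\widehat{N})$ is tailored precisely so that Lemma \ref{conv} is available for every convolution $\mathsf{a}_l\ast_{_N}\mathsf{f}_{h,l}$ or $\mathsf{a}_l\ast_{_N}\widetilde{\mathsf{f}}_{l,h}$ that appears.

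\medskip

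For the first identity I start from \eqref{sintesis}: the $h$-th phase of $\Lambda_{\mathsf{f}}\mathsf{a}$ is $(\Lambda_{\mathsf{f}}\mathsf{a})_h=\sum_{l\in H}\mathsf{a}_l\ast_{_N}\mathsf{f}_{h,l}$. Applying Lemma \ref{conv} to each summand yields $(\Lambda_{\mathsf{f}}\mathsf{a})_h\in\ell^2(N)$ together with
\[
\widehat{(\Lambda_{\mathsf{f}}\mathsf{a})_h}(\xi)=\sum_{l\in H}\widehat{\mathsf{f}}_{h,l}(\xi)\,\widehat{\mathsf{a}}_l(\xi)\,,\qquad \text{a.e. }\xi\in\widehat{N}\,,
\]
which is exactly the $h$-th coordinate of $\mathbf{F}(\xi)\,\mathcal{T}_\Gamma\mathsf{a}(\xi)$. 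The same recipe applied to \eqref{analisis}, together with the elementary identity $\widehat{\widetilde{\mathsf{f}}_{l,h}}(\xi)=\overline{\widehat{\mathsf{f}}_{l,h}(\xi)}$, delivers $\widehat{(\Ac_{\mathsf{f}}\mathsf{a})_h}(\xi)=\sum_{l\in H}\overline{\widehat{\mathsf{f}}_{l,h}(\xi)}\,\widehat{\mathsf{a}}_l(\xi)$, which is the $h$-th coordinate of $\mathbf{F}^*(\xi)\,\mathcal{T}_\Gamma\mathsf{a}(\xi)$.

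\medskip

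For the identity $\mathbf{B}(\xi)=\mathbf{F}(\xi)\mathbf{A}(\xi)$ I would argue column by column, using the observation made just before the theorem that the $l$-th column of $\mathbf{A}(\xi)$ (resp.\ $\mathbf{B}(\xi)$) coincides with $\mathcal{T}_\Gamma L_{(0_N,l)}\mathsf{a}(\xi)$ (resp.\ $\mathcal{T}_\Gamma L_{(0_N,l)}\mathsf{b}(\xi)$). Since left translation on $\Gamma$ commutes with right convolution, $L_{(0_N,l)}(\mathsf{a}\ast\mathsf{f})=(L_{(0_N,l)}\mathsf{a})\ast\mathsf{f}=\Lambda_{\mathsf{f}}(L_{(0_N,l)}\mathsf{a})$, so the $l$-th column of $\mathbf{B}$ rewrites as $\mathcal{T}_\Gamma\Lambda_{\mathsf{f}}(L_{(0_N,l)}\mathsf{a})(\xi)$. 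Feeding this into the first identity (with $L_{(0_N,l)}\mathsf{a}$ in place of $\mathsf{a}$) produces $\mathbf{F}(\xi)\,\mathcal{T}_\Gamma L_{(0_N,l)}\mathsf{a}(\xi)$, which is $\mathbf{F}(\xi)$ times the $l$-th column of $\mathbf{A}(\xi)$.

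\medskip

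The one delicate point will be verifying that the hypothesis of Lemma \ref{conv} transfers under the substitution $\mathsf{a}\mapsto L_{(0_N,l)}\mathsf{a}$ in the last step. The phases of $L_{(0_N,l)}\mathsf{a}$ are obtained from those of $\mathsf{a}$ by a permutation of the $H$-index composed with precomposition by the automorphism $\sigma(l^{-1})$ of $N$; on the Fourier side this amounts to composing with the corresponding dual automorphism of $\widehat{N}$. Since the Haar measure on $\widehat{N}$ is invariant under automorphisms, a routine change of variables shows that the required $L^2(\widehat{N})$-membership of the relevant pointwise products is preserved, so the first identity is indeed available and the argument closes.
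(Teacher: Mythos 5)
Your proof is correct and follows essentially the same route as the paper: Fourier-transform \eqref{sintesis} and \eqref{analisis} termwise via Lemma \ref{conv}, then prove \eqref{pro} column by column using that the $l$-column of $\mathbf{B}$ is $\mathcal{T}_\Gamma(L_{(0_N,l)}[\mathsf{a}\ast\mathsf{f}])=\mathcal{T}_\Gamma([L_{(0_N,l)}\mathsf{a}]\ast\mathsf{f})$. Your closing remark on transferring the $L^2$-product hypothesis to $L_{(0_N,l)}\mathsf{a}$ (via the measure-preserving dual automorphisms of $\widehat{N}$, under which the entries of $\mathbf{F}$ permute among themselves) is a point the paper passes over silently, and it does check out.
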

\begin{proof}
By taking the $N$-Fourier transform in equalities  \eqref{sintesis} and  \eqref{analisis}, and having in mind Lemma \ref{conv} we obtain  that
$
\mathcal{T}_\Gamma\Lambda_{\mathsf{f}}\mathsf{a}(\xi)\, =\, \mathbf{F}(\xi) \, \mathcal{T}_\Gamma\mathsf{a}(\xi)\quad \text{and}\quad \mathcal{T}_\Gamma\Ac_{\mathsf{f}}\mathsf{a}(\xi)\, =\, \mathbf{F}^*(\xi) \, \mathcal{T}_\Gamma \mathsf{a}(\xi)\,. 
$
Concerning the second part, the $l$-column of $\mathbf{A}(\xi)$ is  $\mathcal{T}_\Gamma(L_{(0_N,l)}\mathsf{a})$ and
the $l$-column of $\mathbf{B}(\xi)$ is
\[
\mathcal{T}_\Gamma(L_{(0_N,l)}\mathsf{b})(\xi)= \mathcal{T}_\Gamma(L_{(0_N,l)}[\mathsf{a}\ast\mathsf{f}])(\xi)=\mathcal{T}_\Gamma([L_{(0_N,l)}\mathsf{a}]\ast\mathsf{f})(\xi)=\mathbf{F}(\xi)\mathcal{T}_\Gamma(L_{(0_N,l)}\mathsf{a})(\xi)\,,
\]
that is, the $l$-column of $\mathbf{F}(\xi)\mathbf{A}(\xi)$.
\end{proof}

\medskip

A similar formula to \eqref{pro} is obtained in Ref.~\cite{taylor:89} for functions in the $C^*$-algebra of the group $\Gamma$, even for a non discrete group $N$; see Section \ref{section5} below. It can be also found in Ref.~\cite{manning:12} for functions in $\ell^1(\Gamma)$, being $\Gamma$ a crystallographic group.
\section{Riesz bases for $\ell^2(\Gamma)$  generated by the left regular representation of $\Gamma$}
\label{section3}
In what follows we will assume that the non abelian group $\Gamma$ is the semidirect product $N\rtimes_\sigma H$, where $(N,+)$ is a discrete abelian group and 
$(H,\cdot)$ is a finite group of order $\kappa$. For a fixed $\mathsf{f}\in \ell^2(\Gamma)$, this section is devoted to give a characterization of the sequence $\big\{L_\gamma \mathsf{f}\big\}_{\gamma \in \Gamma}$ as a Riesz basis for $\ell^2(\Gamma)$ in terms of the associated matrix-valued function $\mathbf{F}(\xi)$ introduced in \eqref{F}.

\begin{teo}
\label{bessel}
For $\mathsf{f}\in \ell^2(\Gamma)$, let $\mathbf{F}(\xi)$ be its associated transfer matrix  defined in \eqref{F}. Then, the system
$\big\{L_{\gamma}\mathsf{f}\big\}_{\gamma\in \Gamma}$ is a Bessel sequence for $\ell^2(\Gamma)$ if and only if the entries of the matrix-valued function 
$\mathbf{F}(\xi)$ belong to $L^\infty(\widehat{N})$.  In this case the optimal Bessel bound is given by
\[ 
B_{\mathsf{f}}:=\esup_{\xi\in \widehat{N}} \lambda_{\text{max}}[\mathbf{F}^*(\xi) \mathbf{F}(\xi)]\,,
\]
where $\lambda_{\text{max}}$ denotes the largest eigenvalue of $\mathbf{F}^*(\xi) \mathbf{F}(\xi)$.
\end{teo}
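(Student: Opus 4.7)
The plan is to identify, via the unitary transform $\mathcal{T}_\Gamma$ of Lemma \ref{T}, the synthesis operator $\Lambda_{\mathsf{f}}$ with the multiplication-by-$\mathbf{F}(\xi)$ operator on the vector-valued Hilbert space $L^2_\kappa(\widehat{N})$, and then to invoke the standard fact that such a multiplication operator is bounded if and only if the matrix symbol has essentially bounded operator norm, with operator norm equal to that essential supremum. Recall that $\big\{L_{\gamma}\mathsf{f}\big\}_{\gamma\in \Gamma}$ is Bessel with bound $B$ if and only if $\Lambda_{\mathsf{f}}$ is bounded on $\ell^2(\Gamma)$ with $\|\Lambda_{\mathsf{f}}\|^2\le B$, and the optimal bound equals $\|\Lambda_{\mathsf{f}}\|^2$ (see \cite[Theorem 3.2.3]{ole:16}).

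For the sufficiency, assume every entry of $\mathbf{F}$ lies in $L^\infty(\widehat{N})$. Then the hypothesis of Theorem \ref{Tdomain} is satisfied for every $\mathsf{a}\in \ell^2(\Gamma)$, so $\mathcal{T}_\Gamma \Lambda_{\mathsf{f}}\mathsf{a}(\xi)=\mathbf{F}(\xi)\,\mathcal{T}_\Gamma \mathsf{a}(\xi)$ almost everywhere. Using the unitarity of $\mathcal{T}_\Gamma$ together with the pointwise estimate $\|\mathbf{F}(\xi)v\|^2\le \lambda_{\max}[\mathbf{F}^*(\xi)\mathbf{F}(\xi)]\,\|v\|^2$, one obtains
\[
\|\Lambda_{\mathsf{f}}\mathsf{a}\|^2=\int_{\widehat{N}}\|\mathbf{F}(\xi)\,\mathcal{T}_\Gamma\mathsf{a}(\xi)\|^2\,d\mu_{\widehat{N}}(\xi)\le B_{\mathsf{f}}\,\|\mathsf{a}\|^2,
\]
so the system is Bessel with bound $B_{\mathsf{f}}$.

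For necessity and sharpness, I argue by contradiction: suppose the system is Bessel with bound $B<B_{\mathsf{f}}$; pick $B'\in(B,B_{\mathsf{f}})$ and a measurable set $E\subset \widehat{N}$ of positive finite measure on which $\lambda_{\max}[\mathbf{F}^*(\xi)\mathbf{F}(\xi)]>B'$. Exploiting the finite dimensionality of $\mathbf{F}(\xi)$ --- for instance, by partitioning $E$ into measurable pieces on each of which some canonical basis vector $e_{l}$ witnesses at least a fraction $1/\kappa$ of the top singular value of $\mathbf{F}^*\mathbf{F}$ --- one produces a unit vector field $\xi\mapsto v(\xi)$ on $E$ with $\|\mathbf{F}(\xi)v(\xi)\|^2>B''$ for some $B''\in(B,B')$. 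Since $\chi_E v$ is bounded and supported on a set of finite measure, the preimage $\mathsf{a}:=\mathcal{T}_\Gamma^{-1}(\chi_E v)$ again satisfies the hypothesis of Theorem \ref{Tdomain}, yielding $\|\Lambda_{\mathsf{f}}\mathsf{a}\|^2>B''\,\|\mathsf{a}\|^2>B\,\|\mathsf{a}\|^2$, a contradiction. Finally, the equivalence between $B_{\mathsf{f}}<\infty$ and all entries of $\mathbf{F}$ belonging to $L^\infty(\widehat{N})$ follows from the elementary inequalities $|\mathbf{F}_{h,l}(\xi)|^2\le \lambda_{\max}[\mathbf{F}^*(\xi)\mathbf{F}(\xi)]\le \kappa^2\,\max_{h,l}|\mathbf{F}_{h,l}(\xi)|^2$.

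The main obstacle I anticipate is the measurable choice of the approximate top singular vector field $v(\xi)$ in the necessity step. I sidestep any deep measurable-selection machinery by exploiting $\kappa<\infty$: on a suitable measurable partition of $E$, one of the $\kappa$ coordinate directions must carry at least a $1/\kappa$ fraction of $\|\mathbf{F}(\xi)\|_{op}^2$, and absorbing this factor into a slightly smaller constant $B''$ is harmless. Verifying the $L^2$-product hypothesis of Theorem \ref{Tdomain} for these bounded, compactly supported test vectors, as well as the entry-vs-operator-norm comparison, are then routine.
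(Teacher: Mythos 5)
Your overall strategy is the same as the paper's: conjugate $\Lambda_{\mathsf{f}}$ by the unitary $\mathcal{T}_\Gamma$ into multiplication by $\mathbf{F}(\xi)$ on $L^2_\kappa(\widehat{N})$, bound above by $\lambda_{\max}[\mathbf{F}^*(\xi)\mathbf{F}(\xi)]$ for sufficiency, and test against vector fields supported on a set where $\lambda_{\max}$ is large for necessity. The sufficiency direction, the verification of the hypotheses of Theorem \ref{Tdomain}, and the comparison $|\mathbf{F}_{h,l}(\xi)|^2\le\lambda_{\max}[\mathbf{F}^*(\xi)\mathbf{F}(\xi)]\le\kappa^2\max_{h,l}|\mathbf{F}_{h,l}(\xi)|^2$ are all fine, and together they already give the if-and-only-if characterization (a bound of the form $B\ge B_{\mathsf{f}}/\kappa$ suffices to conclude $B_{\mathsf{f}}<\infty$).

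There is, however, a genuine gap in your proof of \emph{optimality} of $B_{\mathsf{f}}$. You replace the top eigenvector of $\mathbf{F}^*(\xi)\mathbf{F}(\xi)$ by a canonical basis vector $e_l$ chosen on a measurable partition of $E$, using $\max_l\|\mathbf{F}(\xi)e_l\|^2\ge\lambda_{\max}[\mathbf{F}^*(\xi)\mathbf{F}(\xi)]/\kappa$. That inequality is correct, but the factor $1/\kappa$ is a fixed multiplicative loss, not ``a slightly smaller constant'': your construction only yields $\|\mathbf{F}(\xi)v(\xi)\|^2>B'/\kappa$, and $B'/\kappa$ need not exceed $B$, so no contradiction results and you only prove $B\ge B_{\mathsf{f}}/\kappa$. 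The theorem asserts $B_{\mathsf{f}}$ is the \emph{optimal} Bessel bound, so this loss must be removed. Two standard repairs: (i) keep your partition idea but run it over a finite $\varepsilon$-net $\{v_1,\dots,v_m\}$ of the unit sphere of $\CC^\kappa$ rather than over $\{e_1,\dots,e_\kappa\}$; since $\|\mathbf{F}(\xi)v_j\|\ge\|\mathbf{F}(\xi)v\|-\varepsilon\,\|\mathbf{F}(\xi)\|_2$ for $\|v-v_j\|\le\varepsilon$, some $v_j$ satisfies $\|\mathbf{F}(\xi)v_j\|^2\ge(1-\varepsilon)^2\lambda_{\max}[\mathbf{F}^*(\xi)\mathbf{F}(\xi)]$, and now the loss really is absorbable into $B''$; or (ii) invoke a measurable selection of a unit eigenvector field for $\lambda_{\max}$ (the paper simply takes such a field without comment; your instinct that this needs justification is sound, but the fix must preserve the constant). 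With either repair your argument matches the paper's and is complete.
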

 \begin{proof}
 Having in mind the equivalence between the spectral and the Frobenius norms for matrices \cite{horn:99}, we deduce that $B_{\mathsf{f}}<\infty$ if and only if the entries $\widehat{\mathsf{f}}_{h,l}$ of $\mathbf{F}(\xi)$ belong to $L^\infty(\widehat{N})$.

Whenever $\mathsf{a} \in \ell^2(N)$ with $\widehat{\mathsf{a}}_{l}(\xi)\,\widehat{\mathsf{f}}_{h,h'}(\xi)\in L^2(\widehat{N})$,\, $l,h,h'\in H$, by using Lemma \ref{T} and Theorem \ref{Tdomain}  we obtain that
\[
\Big\| \sum_{\gamma\in \Gamma} \mathsf{a}(\gamma)L_{\gamma}\mathsf{f}\Big\|_{\ell^2(\Gamma)}^2 = \big\| \Lambda_{\mathsf{f}}\mathsf{a} \big\|_{\ell^2(\Gamma)}^2=  \big\| \mathcal{T}_\Gamma\Lambda_{\mathsf{f}}\mathsf{a} \big\|_{L^2_\kappa(\widehat{N})}^2=
\Big\| \mathbf{F}(\cdot) \, \mathcal{T}_\Gamma\mathsf{a}(\cdot) \Big\|_{L^2_\kappa(\widehat{N})}^2\,,
\] 
and then,
\begin{equation}
\label{ce}
\Big\|\sum_{\gamma\in \Gamma} \mathsf{a}(\gamma) L_{\gamma}\mathsf{f}\Big\|^2_{\ell^2(\Gamma)}=  \int_{\widehat{N}} [\mathcal{T}_\Gamma\mathsf{a}(\xi)]^* \mathbf{F}^*(\xi) \mathbf{F}(\xi)\mathcal{T}_\Gamma\mathsf{a}(\xi)
 d\mu_{\widehat{N}}(\xi)\,.
\end{equation}
Hence, using Lemma \ref{T}, we obtain that, whenever $\mathsf{a} \in \ell^2(N)$ and $\widehat{\mathsf{f}}_{h,l}(\xi)\in L^\infty(\widehat{N})$, $l,h\in H$, we have
\[
\begin{split}
\Big\|\sum_{\gamma\in \Gamma} \mathsf{a}(\gamma) L_{\gamma}\mathsf{f}(\gamma)\Big\|^2_{\ell^2(\Gamma)}
& \le \int_{\widehat{N}} \lambda_{\text{max}}[\mathbf{F}^*(\xi) \mathbf{F}(\xi)]\,  \|\mathcal{T}_\Gamma\mathsf{a}(\xi)\|^2d\xi \\
 &\le B_{\mathsf{f}} \int_{\widehat{N}}   \|\mathcal{T}_\Gamma\mathsf{a}(\xi)\|^2d\xi= B_{\mathsf{f}} \, \|\mathcal{T}_\Gamma\mathsf{a}\|^2_{ L^2_\kappa(\widehat{N})}=
B_{\mathsf{f}} \, \|\mathsf{a}\|^2_{\ell^2(\Gamma)}\,. 
\end{split}
\]
Consequently, if  $\widehat{\mathsf{f}}_{h,l}(\xi)\in L^\infty(\widehat{N})$, $h,l\in H$, or equivalently $B_{\mathsf{f}}<\infty$, then $\big\{L_{\gamma}\mathsf{f}\big\}_{\gamma\in \Gamma}$ is a Bessel sequence with bound $B_{\mathsf{f}}$. 

\medskip

For any number $J<B_{\mathsf{f}}$, there exists a subset  $\Omega \subset \widehat{N}$ with positive measure such that $ \lambda_{\text{max}}[\mathbf{F}^*(\xi) \mathbf{F}(\xi)]>J$ for $\xi\in \Omega$. Let $\mathsf{a}\in \ell^2(\Gamma)$ such that $\mathcal{T}_\Gamma\mathsf{a}(\xi)$ is
 equal to $0$ when $\xi\notin \Omega$, and it is equal to a unitary eigenvector of 
 $\mathbf{F}^*(\xi)\mathbf{F}(\xi)$ corresponding to  the eigenvalue $ \lambda_{\text{max}}[\mathbf{F}^*(\xi) \mathbf{F}(\xi)]$  when $\xi\in\Omega$. By using \eqref{ce} we obtain
\[
\Big\|\sum_{\gamma\in \Gamma} \mathsf{a}(\gamma) L_{\gamma}\mathsf{f}(\gamma)\Big\|^2_{\ell^2(\Gamma)} =
\int_{\widehat{N}} \lambda_{\text{max}}[\mathbf{F}^*(\xi) \mathbf{F}(\xi)] \|\mathcal{T}_\Gamma \mathsf{a}(\xi)\|^2d\xi \ge J
\int_{\widehat{N}}   \|\mathcal{T}_\Gamma \mathsf{a}(\xi)\|^2d\xi=J \|\mathsf{a}\|^2_{\ell^2(\Gamma)}\,.
\]
Therefore, if $\big\{L_{\gamma}\mathsf{f}\big\}_{\gamma\in \Gamma}$ is Bessel sequence then $B_{\mathsf{f}}<\infty$, or equivalently $\widehat{\mathsf{f}}_{h,l}(\xi)\in L^\infty(\widehat{N})$, $h,l\in H$.  Moreover, the constant $B_{\mathsf{f}}$ is the optimal Bessel bound. 
\end{proof}

\medskip

\begin{teo}
\label{riesz}
Consider $\mathsf{f}\in \ell^2(\Gamma)$ and its associated transfer matrix $\mathbf{F}(\xi)$ given in \eqref{F}. The following statements are equivalent:
\begin{enumerate}[(a)]
\item The system $\big\{L_{\gamma}\mathsf{f}\big\}_{\gamma\in \Gamma}$ is a Riesz basis for $\ell^2(\Gamma)$. 
\item The entries of  the matrix-valued function $\mathbf{F}(\xi)$ belongs to $L^\infty(\widehat{N})$, and $\displaystyle{\einf_{\xi\in \widehat{N}} |\det \mathbf{F}(\xi)|>0}$.
\end{enumerate}
In this case the optimal Riesz bounds are given by
\[
A_{\mathsf{f}}:=\einf_{\xi\in \widehat{N}} \lambda_{\text{min}}[\mathbf{F}^*(\xi) \mathbf{F}(\xi)]\quad \text{and}\quad 
B_{\mathsf{f}}:=\esup_{\xi\in \widehat{N}} \lambda_{\text{max}}[\mathbf{F}^*(\xi) \mathbf{F}(\xi)]\,.
\]
Moreover, its dual Riesz basis is $\big\{L_{\gamma}\mathsf{g}\big\}_{\gamma\in \Gamma}$ where $\mathsf{g}$ is the unique element in $\ell^2(\Gamma)$ satisfying
\begin{equation}
\label{dual1}
\mathbf{F}^*(\xi) \mathcal{T}_\Gamma\mathsf{g}(\xi)=\begin{bmatrix}1&0&\ldots&0\end{bmatrix}^\top,\quad \text{a.e.} \,\, \xi \in \widehat{N},
\end{equation}
where $\mathcal{T}_\Gamma$ is defined in Lemma \ref{T}.
Equivalently, $\mathbf{G}(\xi)=(\mathbf{F}^*(\xi))^{-1}$,\, a.e. \,\,$\xi \in \widehat{N}$ where $\mathbf{G}(\xi)$ is the transfer matrix associated to $\mathsf{g}$.
\end{teo}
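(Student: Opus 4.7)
The plan is to transport the whole problem to $L^2_\kappa(\widehat{N})$ via the unitary $\mathcal{T}_\Gamma$ of Lemma \ref{T}. By Theorem \ref{Tdomain}, the synthesis operator $\Lambda_{\mathsf{f}}$ is unitarily equivalent to pointwise multiplication by $\mathbf{F}(\xi)$; hence $\big\{L_{\gamma}\mathsf{f}\big\}_{\gamma\in\Gamma}$ is a Riesz basis for $\ell^2(\Gamma)$ precisely when the corresponding multiplication operator is a bounded bijection of $L^2_\kappa(\widehat{N})$, i.e., both $\mathbf{F}(\xi)$ and $\mathbf{F}(\xi)^{-1}$ have essentially bounded entries.

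\textbf{Equivalence (a)$\Leftrightarrow$(b) and optimal bounds.} The upper Riesz bound is exactly the Bessel bound, so by Theorem \ref{bessel} it is equivalent to $\widehat{\mathsf{f}}_{h,l}\in L^\infty(\widehat{N})$ and equal to $B_{\mathsf{f}}$. For the lower bound I would start from \eqref{ce} and argue, by the same test-function construction as in Theorem \ref{bessel} but now picking $\mathcal{T}_\Gamma\mathsf{a}(\xi)$ supported on a set where $\lambda_{\text{min}}[\mathbf{F}^*\mathbf{F}]$ is close to its essential infimum and pointwise a unit eigenvector for that eigenvalue, that a lower Riesz bound $A_{\mathsf{f}}>0$ exists if and only if $\einf_{\xi}\lambda_{\text{min}}[\mathbf{F}^*(\xi)\mathbf{F}(\xi)]>0$, and that its optimal value is given by the stated essential infimum. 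To convert this spectral condition into the determinant form in (b), I would exploit that $\mathbf{F}^*(\xi)\mathbf{F}(\xi)$ is a $\kappa\times\kappa$ positive semidefinite matrix and that $\det[\mathbf{F}^*\mathbf{F}]=|\det\mathbf{F}|^2=\prod_{i=1}^\kappa\lambda_i$, together with $\lambda_{\text{max}}\le B_{\mathsf{f}}$, to get the sandwich
\[
\frac{|\det\mathbf{F}(\xi)|^2}{B_{\mathsf{f}}^{\kappa-1}}\,\le\,\lambda_{\text{min}}[\mathbf{F}^*(\xi)\mathbf{F}(\xi)]\,\le\,|\det\mathbf{F}(\xi)|^{2/\kappa},
\]
from which $\einf|\det\mathbf{F}|>0\,\Leftrightarrow\,\einf\lambda_{\text{min}}[\mathbf{F}^*\mathbf{F}]>0$. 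Under (b), pointwise a.e.\ invertibility of $\mathbf{F}(\xi)$ also gives surjectivity of $\Lambda_{\mathsf{f}}$, so one obtains a genuine Riesz basis and not just a Riesz sequence.

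\textbf{Dual Riesz basis.} For the dual, the key observation is that biorthogonality $\langle L_{\eta}\mathsf{f},L_{\gamma}\mathsf{g}\rangle_{\ell^2(\Gamma)}=\delta_{\eta,\gamma}$ collapses, through $L_{\gamma}^{*}L_{\eta}=L_{\gamma^{-1}\eta}$ and the unitarity of the left regular representation, to the single equation $\Ac_{\mathsf{f}}\mathsf{g}=\delta_{e_\Gamma}$. Applying $\mathcal{T}_\Gamma$ and Theorem \ref{Tdomain}, and noting that $e_\Gamma=(0_N,1_H)$ with $1_H$ in the first slot of our ordering of $H$, one obtains $\mathcal{T}_\Gamma\delta_{e_\Gamma}(\xi)=[1,0,\ldots,0]^\top$ and hence \eqref{dual1}. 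Uniqueness of $\mathsf{g}$ follows from the a.e.\ invertibility of $\mathbf{F}^*(\xi)$, and $\mathsf{g}\in\ell^2(\Gamma)$ because $(\mathbf{F}^*)^{-1}\in L^\infty$ under (b). Finally, for the matrix identity $\mathbf{G}(\xi)=(\mathbf{F}^*(\xi))^{-1}$, the $l$-column of $\mathbf{G}(\xi)$ is $\mathcal{T}_\Gamma(L_{(0_N,l)}\mathsf{g})(\xi)$, and applying the same argument to $L_{(0_N,l)}\mathsf{g}$ in place of $\mathsf{g}$ yields $\mathbf{F}^*(\xi)\,\mathcal{T}_\Gamma(L_{(0_N,l)}\mathsf{g})(\xi)=e_{l}$; assembling columns gives $\mathbf{F}^*(\xi)\mathbf{G}(\xi)=I$.

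\textbf{Main obstacle.} The technically delicate point is the passage between the spectral condition $\einf\lambda_{\text{min}}[\mathbf{F}^*\mathbf{F}]>0$ and the determinant condition $\einf|\det\mathbf{F}|>0$; this is elementary linear algebra but relies crucially on the finite size $\kappa$ of the matrix and on already having the uniform bound $\lambda_{\text{max}}\le B_{\mathsf{f}}$. The remaining steps are largely a faithful adaptation to the smallest eigenvalue of the reasoning in Theorem \ref{bessel}, plus direct transport of biorthogonality through the unitary $\mathcal{T}_\Gamma$.
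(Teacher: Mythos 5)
Your proposal is correct and follows essentially the same route as the paper: Theorem \ref{bessel} for the upper bound, the eigenvector test-function construction applied to \eqref{ce} for the optimal lower bound, the determinant--eigenvalue sandwich (the paper's \eqref{det}) to pass between $\einf\lambda_{\text{min}}[\mathbf{F}^*\mathbf{F}]>0$ and $\einf|\det\mathbf{F}|>0$, and biorthogonality transported through $\mathcal{T}_\Gamma$ to obtain \eqref{dual1} and $\mathbf{G}=(\mathbf{F}^*)^{-1}$. Your explicit observation that pointwise a.e.\ invertibility of $\mathbf{F}(\xi)$ with bounded inverse yields surjectivity of $\Lambda_{\mathsf{f}}$ (hence a genuine Riesz basis and not merely a Riesz sequence) is a point the paper leaves implicit, and is a welcome addition.
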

\begin{proof}
First of all, note that for any $\kappa\times \kappa$ hermitian matrix $\mathbf{M}$ we have
 that \[\lambda^\kappa_{\text{min}}(\mathbf{M})\le \det \mathbf{M}=\lambda_{\text{min}}(\mathbf{M}) \cdots \cdot\lambda_{\text{max}}(\mathbf{M})
 \le \lambda_{\text{min}}(\mathbf{M}) \lambda^{\kappa-1}_{\text{max}}(\mathbf{M})\,.\]
Using these inequalities for $\mathbf{M}=\mathbf{F}^*(\xi)\mathbf{F}(\xi)$ we obtain that
 \begin{equation}
 \label{det}
 A_{\mathsf{f}}^\kappa\le \einf_{\xi\in \widehat{N}} |\det \mathbf{F}(\xi)|^2\le A_{\mathsf{f}} B_{\mathsf{f}}^{\kappa-1}\,.
 \end{equation}	
 
\noindent$(a)\Rightarrow (b)$. If $(a)$ holds $\big\{L_{\gamma}\mathsf{f}\big\}_{\gamma\in \Gamma}$ is a Bessel system; thus, having in mind Theorem \ref{bessel}, the entries of  the matrix-valued function $\mathbf{F}(\xi)$ belong to $L^\infty(\widehat{N})$.
Using the Rayleigh-Ritz theorem \cite{horn:99} and  Lemma \ref{T},  for any $\mathsf{a} \in \ell^2(N)$ we obtain that 
\begin{equation}
\label{lowerriesz}
\begin{split}
\Big\|\sum_{\gamma\in \Gamma} \mathsf{a}(\gamma) L_{\gamma}\mathsf{f}\Big\|^2_{\ell^2(\Gamma)}&=  \int_{\widehat{N}} [\mathcal{T}_\Gamma\mathsf{a}(\xi)]^* \mathbf{F}^*(\xi) \mathbf{F}(\xi)\mathcal{T}_\Gamma\mathsf{a}(\xi)
 d\mu_{\widehat{N}}(\xi)\\
 &\ge \int_{\widehat{N}}  
 \lambda_{\text{min}}[\mathbf{F}^*(\xi) \mathbf{F}(\xi)]\, \|\mathcal{T}_\Gamma\mathsf{a}(\xi)\|^2d\xi \\&\ge A_{\mathsf{f}}
\int_{\widehat{N}}   \|\mathcal{T}_\Gamma\mathsf{a}(\xi)\|^2d\xi= A_{\mathsf{f}} \, \|\mathcal{T}_\Gamma\mathsf{a}\|^2_{ L^2_\kappa(\widehat{N})}=
A_{\mathsf{f}} \, \|\mathsf{a}\|^2_{\ell^2(\Gamma)}.
  \end{split}
\end{equation}
  For any number $J>A_{\mathsf{f}}$, there exist a subset  $\Omega \subset \widehat{N}$ with positive measure such that $ \lambda_{\text{min}}[\mathbf{F}^*(\xi) \mathbf{F}(\xi)]<J$ for $\xi\in \Omega$. Let $\mathsf{a}\in \ell^2(\Gamma)$ such that $\mathcal{T}_\Gamma\mathsf{a}(\xi)$
 is equal to $0$ when $\xi\notin \Omega$, and it is equal to a unitary eigenvector of 
 $\mathbf{F}^*(\xi)\mathbf{F}(\xi)$ corresponding to  the eigenvalue $ \lambda_{\text{min}}[\mathbf{F}^*(\xi) \mathbf{F}(\xi)]$  when $\xi\in\Omega$. Then, using \eqref{ce}, we obtain
\[
\begin{split}
&
\Big\|\sum_{\gamma\in \Gamma} \mathsf{a}(\gamma) L_{\gamma}\mathsf{f}(\gamma)\Big\|^2_{\ell^2(\Gamma)} =
\int_{\widehat{N}}  
 \lambda_{\text{min}}[\mathbf{F}^*(\xi) \mathbf{F}(\xi)] \|\mathcal{T}_\Gamma\mathsf{a}(\xi)\|^2d\xi \le J
\int_{\widehat{N}}   \|\mathcal{T}_\Gamma\mathsf{a}(\xi)\|^2d\xi=
J \|\mathsf{a}\|^2_{\ell^2(\Gamma)}.
  \end{split}
\]
Therefore, if $\big\{L_{\gamma}\mathsf{f}\big\}_{\gamma\in \Gamma}$ is a Riesz basis then $A_{\mathsf{f}}>0$ which,  having in mind \eqref{det}, implies $\displaystyle{\einf_{\xi\in \widehat{N}} |\det \mathbf{F}(\xi)|>0}$. Besides, the optimal lower Riesz bound is  $A_{\mathsf{f}}$. 

\medskip

\noindent$(b)\Rightarrow (a)$. Since  the entries of  $\mathbf{F}(\xi)\in L^\infty(\widehat{N})$, using Theorem \ref{bessel} we deduce that $B_{\mathsf{f}}<\infty$. Since  $\displaystyle{\einf_{\xi\in \widehat{N}} |\det \mathbf{F}(\xi)|>0}$, using \eqref{det} we obtain $A_{\mathsf{f}}>0$. As a consequence of Theorem \ref{bessel} and inequality \eqref{lowerriesz} we deduce that  the system $\big\{L_{\gamma}\mathsf{f}\big\}_{\gamma\in \Gamma}$ is a Riesz basis for $\ell^2(\Gamma)$.

\medskip

Next, we find its dual Riesz basis.  Since $\einf_{\xi\in \widehat{N}} |\det \mathbf{F}(\xi)|>0$, and the entries of $\mathbf{F}^*(\xi)$ belongs to $L^\infty(\widehat{N})$, the entries of the matrix-valued function
$[\mathbf{F}^*(\xi)]^{-1}$ belong to $\ell^2(\widehat{N})$. By Lemma \ref{T}, there exist a unique $\mathsf{g}\in \ell^2(\Gamma)$ such that $\mathcal{T}_\Gamma\mathsf{g}$ is the first column of $[\mathbf{F}^*(\xi)]^{-1}$, or equivalently, a unique $\mathsf{g}\in \ell^2(\Gamma)$ satisfying \eqref{dual1}. From Theorem \ref{Tdomain} we get

\[
\mathcal{T}_\Gamma \big[\langle \mathsf{g},L_{\gamma}\mathsf{f} \rangle\big]_{\gamma\in \Gamma}(\xi)= \mathcal{T}_\Gamma(\Ac_{\mathsf{f}}\mathsf{g})(\xi)= \mathbf{F}^*(\xi) \mathcal{T}_\Gamma\mathsf{g}(\xi)=
\begin{bmatrix}1&0&\ldots&0\end{bmatrix}^\top
\]
Hence $\langle \mathsf{g},L_{\gamma}\mathsf{f} \rangle=\boldsymbol{\delta}(\gamma)$, and the system $\big\{L_{\gamma}\mathsf{g}\big\}_{\gamma\in \Gamma}$ is the dual Riesz basis to $\big\{L_{\gamma}\mathsf{f}\big\}_{\gamma\in \Gamma}$\,. Besides, we have that $\mathsf{g}\ast \mathsf{f}^*(\gamma)=\langle \mathsf{g},L_{\gamma}\mathsf{f} \rangle=\boldsymbol{\delta}(\gamma)$.  Applying \eqref{pro},  having in mind that the matrices corresponding to $\boldsymbol{\delta}$ and $\mathsf{f}^*$ are $\big[\, \widehat{\boldsymbol{\delta}}_{h,l}\,\big]_{h,l\in H}=\mathbf{I}_\kappa$ and $\big[\, \widehat{\mathsf{f}^*_{h,l}}\,\big]_{h,l\in H}=\mathbf{F}^*$ we get that 
 $\mathbf{F}^*(\xi)\mathbf{G}(\xi)=\mathbf{I}_\kappa $,\, a.e. $\xi \in \widehat{N}$.
\end{proof}
 
 \begin{cor}
 \label{ort}  
The system $\big\{L_{\gamma}\mathsf{f}\big\}_{\gamma\in \Gamma}$
is an orthonormal basis for $\ell^2(\Gamma)$ if and only if the matrix-valued function $\mathbf{F}(\xi)$ is unitary a.e. $\xi \in \widehat{N}$, or equivalently, if $\mathbf{F}^*(\xi) \mathcal{T}_\Gamma\mathsf{f}(\xi)=\begin{bmatrix}1&0&\ldots&0\end{bmatrix}^\top$ a.e. $\xi \in \widehat{N}$.
\end{cor}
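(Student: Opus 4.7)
The plan is to deduce the corollary from Theorem~\ref{riesz} together with the standard characterization of an orthonormal basis as a Riesz basis whose optimal bounds satisfy $A=B=1$. Under this identification, Theorem~\ref{riesz} reduces the orthonormal basis property to $A_{\mathsf{f}}=B_{\mathsf{f}}=1$, which forces $\lambda_{\min}[\mathbf{F}^*(\xi)\mathbf{F}(\xi)]=\lambda_{\max}[\mathbf{F}^*(\xi)\mathbf{F}(\xi)]=1$ a.e. Since $\mathbf{F}^*(\xi)\mathbf{F}(\xi)$ is Hermitian positive semidefinite, all its eigenvalues being $1$ is equivalent to $\mathbf{F}^*(\xi)\mathbf{F}(\xi)=\mathbf{I}_{\kappa}$ a.e.; as $\mathbf{F}(\xi)$ is a square $\kappa\times\kappa$ matrix, this is the same as $\mathbf{F}(\xi)$ being unitary a.e.

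For the second equivalence, the forward implication is immediate: as noted right after Definition~\ref{defF}, $\mathcal{T}_\Gamma\mathsf{f}(\xi)$ is the first column of $\mathbf{F}(\xi)$, so if $\mathbf{F}^*(\xi)\mathbf{F}(\xi)=\mathbf{I}_{\kappa}$ a.e.\ then $\mathbf{F}^*(\xi)\mathcal{T}_\Gamma\mathsf{f}(\xi)$ is the first column of $\mathbf{I}_{\kappa}$, namely $[1,0,\ldots,0]^\top$. For the converse, I would establish orthonormality directly by Plancherel, bypassing Theorem~\ref{Tdomain} initially. A short computation using the identity $(L_{(m,l)}\mathsf{f})_h=\mathsf{f}_{h,l}(\,\cdot\,-m)$ together with the $N$-translation rule $\widehat{a(\cdot-m)}(\xi)=\langle -m,\xi\rangle\widehat{a}(\xi)$ gives
\[
\langle \mathsf{f},L_{(m,l)}\mathsf{f}\rangle_{\ell^2(\Gamma)}=\int_{\widehat{N}}\overline{\langle -m,\xi\rangle}\,\bigl(\mathbf{F}^*(\xi)\mathcal{T}_\Gamma\mathsf{f}(\xi)\bigr)_{l}\, d\mu_{\widehat{N}}(\xi);
\]
the hypothesis collapses the right-hand side to $\delta_{l,1_H}\int_{\widehat{N}}\langle m,\xi\rangle\, d\mu_{\widehat{N}}(\xi)$, which equals $1$ when $(m,l)=e_\Gamma$ and $0$ otherwise. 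Combined with $\langle L_\eta \mathsf{f},L_\gamma \mathsf{f}\rangle=\langle \mathsf{f},L_{\eta^{-1}\gamma}\mathsf{f}\rangle$, this shows that $\{L_\gamma\mathsf{f}\}_{\gamma\in\Gamma}$ is an orthonormal system. Consequently $\Lambda_{\mathsf{f}}$ is an isometry of $\ell^2(\Gamma)$, the system is Bessel, and by Theorem~\ref{bessel} the entries of $\mathbf{F}(\xi)$ lie in $L^\infty(\widehat{N})$; Theorem~\ref{Tdomain} now legitimately applies and converts $\mathcal{A}_{\mathsf{f}}\Lambda_{\mathsf{f}}=I$ into $\mathbf{F}^*(\xi)\mathbf{F}(\xi)=\mathbf{I}_{\kappa}$ a.e.

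The main delicate point is precisely in this converse step: I cannot apply Theorem~\ref{Tdomain} with $\mathsf{a}=\mathsf{f}$ from the outset, because its hypothesis $\widehat{\mathsf{f}}_{l}\widehat{\mathsf{f}}_{h,h'}\in L^2(\widehat{N})$ is not automatic from $\mathsf{f}\in\ell^2(\Gamma)$. The route around this obstacle is to first extract orthonormality of $\{L_\gamma\mathsf{f}\}$ via the direct Plancherel computation above; once orthonormality is in hand, the resulting Bessel property supplies the $L^\infty$ bound on $\mathbf{F}(\xi)$ that legitimizes Theorem~\ref{Tdomain} for the final matricial identity.
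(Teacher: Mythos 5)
Your proof is correct, but it takes a genuinely different route from the paper's. The paper's proof is two lines: a Riesz basis is an orthonormal basis precisely when it coincides with its dual basis, i.e. $\mathsf{g}=\mathsf{f}$; by Theorem \ref{riesz} this reads $\mathbf{F}=\mathbf{G}=(\mathbf{F}^*)^{-1}$, i.e. $\mathbf{F}(\xi)$ unitary a.e., and the second formulation is immediate because $\mathsf{g}$ is characterized as the unique solution of \eqref{dual1}, so $\mathsf{g}=\mathsf{f}$ exactly when $\mathbf{F}^*(\xi)\mathcal{T}_\Gamma\mathsf{f}(\xi)=\begin{bmatrix}1&0&\cdots&0\end{bmatrix}^\top$. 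You instead use the characterization of an orthonormal basis as a Riesz basis with optimal bounds $A_{\mathsf{f}}=B_{\mathsf{f}}=1$ and squeeze the eigenvalues of $\mathbf{F}^*(\xi)\mathbf{F}(\xi)$; this is equally valid and equally a corollary of Theorem \ref{riesz}. Where you diverge most is in the second equivalence: rather than quoting the uniqueness clause of \eqref{dual1}, you prove directly by Plancherel on $N$ that the condition $\mathbf{F}^*(\xi)\mathcal{T}_\Gamma\mathsf{f}(\xi)=\begin{bmatrix}1&0&\cdots&0\end{bmatrix}^\top$ a.e. forces $\{L_\gamma\mathsf{f}\}_{\gamma\in\Gamma}$ to be an orthonormal system, and only then recover the $L^\infty$ bound on the entries of $\mathbf{F}$ needed to legitimize Theorem \ref{Tdomain} and conclude $\mathbf{F}^*(\xi)\mathbf{F}(\xi)=\mathbf{I}_\kappa$. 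This costs more work but buys two things the paper's argument leaves implicit: the equivalence of the two matrix conditions is established for an arbitrary $\mathsf{f}\in\ell^2(\Gamma)$, without presupposing that the system is already a Riesz basis, and the integrability hypothesis of Theorem \ref{Tdomain} (which a naive application with $\mathsf{a}=\mathsf{f}$ would not obviously satisfy) is explicitly flagged and circumvented. Both proofs are sound; the paper's is shorter because it leans on the full strength of the dual-basis statement already packaged into Theorem \ref{riesz}.
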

\begin{proof}
The Riesz basis $\big\{L_{\gamma}\mathsf{f}\big\}_{\gamma\in \Gamma}$ is an ortonormal basis if and only if the generator of the dual Riesz basis $\mathsf{g}=\mathsf{f}$, or equivalently, if $\mathbf{G}=\mathbf{F}$. Thus, the result follows from Theorem \ref{riesz}.
\end{proof} 

\begin{cor}
\label{dual} 
Consider $\mathsf{f},\mathsf{g}\in \ell^2(\Gamma)$ and their associated transfer matrices $\mathbf{F}(\xi)$, $\mathbf{G}(\xi)$ defined in \eqref{F}. Assume that  the entries of $\mathbf{F}(\xi)$ and $\mathbf{G}(\xi)$ belong to $L^\infty(\widehat{N})$. Then,  the systems $\big\{L_{\gamma}\mathsf{f}\big\}_{\gamma\in \Gamma}$ and $\big\{L_{\gamma}\mathsf{g}\big\}_{\gamma\in \Gamma}$  form a pair of dual Riesz bases if and only if  $\mathbf{G}(\xi)=\big[\mathbf{F}^*(\xi)\big]^{-1}$ a.e. $\xi\in \widehat{N}$.
\end{cor}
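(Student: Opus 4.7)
The plan is to deduce the equivalence directly from Theorem \ref{riesz}, after verifying its hypotheses in the reverse direction.

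For the forward implication, I would observe that if the two systems form a pair of dual Riesz bases, then in particular $\{L_\gamma\mathsf{f}\}_{\gamma\in\Gamma}$ is a Riesz basis, so Theorem \ref{riesz} applies; its last assertion characterizes the (unique) dual generator by the identity $\mathbf{G}(\xi) = [\mathbf{F}^*(\xi)]^{-1}$, which is exactly the claim. Alternatively, one can pass through the biorthogonality $\langle \mathsf{g}, L_\gamma\mathsf{f}\rangle_{\ell^2(\Gamma)} = \boldsymbol{\delta}(\gamma)$, rewrite it as $\mathsf{g}\ast\mathsf{f}^*=\boldsymbol{\delta}$, and apply \eqref{pro} together with the identifications of the transfer matrices of $\mathsf{f}^*$ and $\boldsymbol{\delta}$ as $\mathbf{F}^*$ and $\mathbf{I}_\kappa$ respectively; both routes give the same conclusion.

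For the reverse implication, assume $\mathbf{G}(\xi) = [\mathbf{F}^*(\xi)]^{-1}$ a.e.~$\xi\in\widehat{N}$. The key step is to upgrade the two Bessel systems to Riesz bases. Since the entries of $\mathbf{G}$ lie in $L^\infty(\widehat{N})$, the determinant $\det\mathbf{G}(\xi)$, being a polynomial in those entries, is essentially bounded above by some constant $C>0$; hence $|\det\mathbf{F}(\xi)| = |\det\mathbf{G}(\xi)|^{-1} \ge 1/C > 0$ a.e. Combined with the hypothesis that the entries of $\mathbf{F}$ belong to $L^\infty(\widehat{N})$, Theorem \ref{riesz} yields that $\{L_\gamma\mathsf{f}\}_{\gamma\in\Gamma}$ is a Riesz basis; by the symmetric argument, so is $\{L_\gamma\mathsf{g}\}_{\gamma\in\Gamma}$. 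Invoking the uniqueness part of Theorem \ref{riesz} once more, the dual Riesz basis of $\{L_\gamma\mathsf{f}\}_{\gamma\in\Gamma}$ is generated by the unique $\tilde{\mathsf{g}}\in\ell^2(\Gamma)$ whose associated transfer matrix equals $[\mathbf{F}^*]^{-1}=\mathbf{G}$; the unitarity of $\mathcal{T}_\Gamma$ (Lemma \ref{T}) then forces $\tilde{\mathsf{g}}=\mathsf{g}$, completing the argument.

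The main obstacle is really only the essential lower bound on $|\det\mathbf{F}|$ in the converse direction; once that is in place, the rest is an immediate application of the characterization and uniqueness statements already packaged in Theorem \ref{riesz}, so no new analytical difficulty arises.
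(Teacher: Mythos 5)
Your proposal is correct and follows essentially the same route as the paper: the one nontrivial point in the converse direction is exactly the observation that $\esup_{\xi}|\det\mathbf{G}(\xi)|<\infty$ forces $\einf_{\xi}|\det\mathbf{F}(\xi)|=\big(\esup_{\xi}|\det\mathbf{G}(\xi)|\big)^{-1}>0$, after which Theorem \ref{riesz} (including its uniqueness statement for the dual generator, via the injectivity of $\mathsf{g}\mapsto\mathbf{G}$ from Lemma \ref{T}) delivers both implications. The paper's own proof is just a terser version of the same argument.
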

\begin{proof} 
Since the entries of $\mathbf{G}(\xi)$ belong to $L^\infty(\widehat{N})$, if $\mathbf{G}(\xi)\mathbf{F}^*(\xi)=\mathbf{I}_{\kappa}$ we have that  
\[
\einf_{\xi\in \widehat{N}} |\det \mathbf{F}(\xi)|= \einf_{\xi\in \widehat{N}} \big(|\det \mathbf{G}(\xi)|^{-1} \big)=
 \big(\esup_{\xi\in \widehat{N}} |\det \mathbf{G}(\xi)| \big)^{-1}>0.
 \] 
 Hence, the result is easily obtained from Theorem \ref{riesz}.
 \end{proof} 
\subsection{Remarks}
\label{remark}
\noindent $\bullet$ Whenever the generator $\mathsf{f}$ belongs to $\ell^1(\Gamma)$, the matrix-valued function $\mathbf{F}(\xi)$ has continuous entries in the compact $\widehat{N}$ (recall that $N$ is discrete).  Thus, from Theorem \ref{bessel} the system $\big\{L_{\gamma}\mathsf{f}\big\}_{\gamma\in \Gamma}$ is always a Bessel sequence for 
$\ell^2(\Gamma)$. From Theorem \ref{riesz}  it is a Riesz basis for $\ell^2(\Gamma)$ if and only if the matrix-valued function $\mathbf{F}(\xi)$ is non-singular for all 
$\xi\in \widehat{N}$.
Finally, from Corollary \ref{ort}  it is an orthonormal basis if and only if $\mathbf{F}(\xi)$ is unitary for all $\xi\in \widehat{N}$.

\medskip

\noindent $\bullet$  Theorems \ref{bessel} and \ref{riesz}, and Corollary \ref{ort} can be restated in terms of the convolution operator.
 Namely (see \cite[Lemma 3.2.1 and Proposition 3.6.8]{ole:16}),
 \begin{enumerate}[a.]
 \item The expression $\Lambda_{\mathsf{f}}\mathsf{a}=\sum_{\eta\in \Gamma} \mathsf{a}(\eta)L_{\eta}\mathsf{f}=\mathsf{a}\ast \mathsf{f}$
defines a  bounded linear operator $\Lambda_{\mathsf{f}}:\ell^2(\Gamma) \rightarrow  \ell^2(\Gamma)$  if and only if the entries of the matrix-valued function $\mathbf{F}(\xi)$ belong to $L^\infty(\widehat{N})$. In this case, $\|\Lambda_{\mathsf{f}}\|=B^{1/2}_{\mathsf{f}}$.

\item Assume that the entries of the matrix-valued function $\mathbf{F}(\xi)$ belong to $L^\infty(\widehat{N})$. Then, $\Lambda_{\mathsf{f}}$ is an invertible operator  if and only if  $\einf_{\xi\in \widehat{N}} |\det \mathbf{F}(\xi)|>0$.  In this case, $\|\Lambda_{\mathsf{f}}^{-1}\|=A^{-1/2}_{\mathsf{f}}$ and the condition number of 
$\Lambda_{\mathsf{f}}$ is $\|\Lambda_{\mathsf{f}}\|\, \|\Lambda_{\mathsf{f}}^{-1}\|=(B_{\mathsf{f}}/A_{\mathsf{f}})^{1/2}$. Besides, there exists a unique 
$\mathsf{h}\in \ell^2(\Gamma)$ such that $\mathsf{f}\ast \mathsf{h}=\boldsymbol{\delta}$, and satisfying
\[
 \mathbf{H}(\xi) \mathbf{F}(\xi) = \mathbf{I}_\kappa\,,\quad \text{a.e.} \,\, \xi \in \widehat{N}\,,
\]
where $\mathbf{H}(\xi)$ is the transfer matrix associated to $\mathsf{h}$. Besides, $\mathsf{h}=\mathsf{g}^*$ where $\mathsf{g}$ is the function defined in Theorem \ref{riesz}.

\item The linear map $\Lambda_{\mathsf{f}}$ defines a unitary operator  if and only if $\mathbf{F}(\xi)$ is unitary  a.e. $\xi \in \widehat{N}$.
\end{enumerate}
\subsection{Riesz bases examples associated to the infinite dihedral group $D_{\infty}$}
We illustrate the above results of this section in  the case of the infinite dihedral group $D_{\infty}= \ZZ \rtimes \{1,-1\}$. Recall  that $\widehat{\mathbb{Z}}\cong \mathbb{T}$, with $\langle n,z \rangle = z^n$, $z\in \mathbb{T}$, and consequently the Fourier transform of  the sequence $\{\mathsf{a}(n)\}_{n\in \mathbb{Z}}$ is the $z$-transform  $\widehat{\mathsf{a}}(z)=\sum_{n\in \mathbb{Z}} \mathsf{a}(n) z^{-n}$ (see, for instance, \cite[Theorem 4.5]{folland:95}). 

For any $\mathsf{f} \in \ell^2(D_{\infty})$,
the first column of $\mathbf{F}(z),$ is formed by the $z$-transforms of
$\mathsf{f}_{1}(n)=\mathsf{f}(n,1)$ and $\mathsf{f}_{-1}(n)=\mathsf{f}(n,-1)$, $n\in \ZZ$.
The second column is formed by the $z$-transforms of $\mathsf{f}_{1,-1}(n)=\mathsf{f}_{-1}(-n)$ and $\mathsf{f}_{-1,-1}(n)=\mathsf{f}_{1}(-n)$, $n\in \ZZ$; that is
\begin{equation}\label{ej}
\mathbf{F}(z)=\begin{bmatrix}\widehat{\mathsf{f}}_{1}(z)&\widehat{\mathsf{f}}_{-1}(z^{-1})\\
\widehat{\mathsf{f}}_{-1}(z)&\widehat{\mathsf{f}}_{1}(z^{-1})\end{bmatrix}\,,\quad z\in \mathbb{T}\,.
\end{equation}

Firstly, according to Corollary \ref{ort}, the sequence $\{L_{\gamma}\mathsf{f}\}_{\gamma\in D_{\infty}}$ is an orthonormal basis for $\ell^2(D_{\infty})$ if and only if
$\mathbf{F}^*(z)\mathcal{T}_{\Gamma}\mathsf{f}(z)=\begin{bmatrix}1&0\end{bmatrix}^\top$, or equivalently
\[
\begin{split}
&|\widehat{\mathsf{f}}_{1}(z)|^2+|\widehat{\mathsf{f}}_{-1}(z)|^2=1\quad \text{and}\quad\widehat{\mathsf{f}}_{1}(z)\overline{\widehat{\mathsf{f}}_{-1}(z^{-1})}+\widehat{\mathsf{f}}_{-1}(z)\overline{\widehat{\mathsf{f}}_{1}(z^{-1})}=0\,,\quad \text{a.e.} \, \, z\in \mathbb{T}\,.
\end{split}
\]
These equations are satisfied, for example, when  $\widehat{\mathsf{f}}_{-1}(z)=0$ and $|\widehat{\mathsf{f}}_{1}(z)|=1$. In signal processing jargon, complex transfer functions satisfying $|f(z)|=1$ in $\mathbb{T}$ are called allpass filters; expressions for rational allpass filter, their properties, as well as efficient ways to compute the 
$\ast_{N}$-convolutions in \eqref{sintesis} and \eqref{analisis}, can be found in Ref.~ \cite[Section 3.4]{vaid:93}. The 
simplest allpass filter $\widehat{\mathsf{f}}_{1}(z)=z^k$ yields to the canonical  basis $\{L_{\gamma}\boldsymbol{\delta}\}_{\gamma\in D_{\infty}}$. Other interesting solutions of the above equations are 
\[
\widehat{\mathsf{f}}_{1}(e^{iw})=\begin{cases}1,&|w|\le a\\
0,&|w|> a
\end{cases}\,;\quad \widehat{\mathsf{f}}_{-1}(e^{iw})=\begin{cases}0,&|w|\le a\\
1,&|w|> a
\end{cases}
\]
for a fixed $a\in (0,\pi)$.
\medskip

Secondly, according to Theorem \ref{riesz}, the sequence $\{L_{\gamma}\mathsf{f}\}_{\gamma\in D_{\infty}}$ is a Riesz basis for $\ell^2(D_{\infty})$ if and only if
\[
\einf_{z\in \mathbb{T}} |\det \mathbf{F}(z)|= \einf_{z\in \mathbb{T}} \big|\widehat{\mathsf{f}}_{1}(z)\widehat{\mathsf{f}}_{1}(z^{-1})-\widehat{\mathsf{f}}_{-1}(z)\widehat{\mathsf{f}}_{-1}(z^{-1})\big|>0\,.\]
In this case, by solving 
$\mathbf{F}^*(z) \begin{bmatrix}\widehat{\mathsf{g}}_{1}(z)&
\widehat{\mathsf{g}}_{-1}(z)\end{bmatrix}^\top=
\begin{bmatrix}1&
0\end{bmatrix}^\top
$, we obtain
\begin{equation}\label{ej1}
\begin{split}
&\widehat{\mathsf{g}}_{1}(z)=\frac{\overline{\widehat{\mathsf{f}}_{1}(z)}}{\det \mathbf{F}^*(z)}\,;\quad \widehat{\mathsf{g}}_{-1}(z)=\frac{-\overline{\widehat{\mathsf{f}}_{-1}(z)}}{\det \mathbf{F^*}(z)}\,,
\end{split}
\end{equation}
which provides the generator $\mathsf{g}$ of its dual Riesz basis $\{L_{\gamma}\mathsf{g}\}_{\gamma\in D_{\infty}}$. Whether $\mathsf{f}$ is real,
we have that  $\widehat{\mathsf{f}}_{1}(z^{-1})=\overline{\widehat{\mathsf{f}}_{1}(z)}$ and $\widehat{\mathsf{f}}_{-1}(z^{-1})=\overline{\widehat{\mathsf{f}}_{-1}(z)}$, from which it is straightforward to deduce that the optimal Riesz bounds are
\begin{equation}\label{rb}
\begin{split}
&A_{\mathsf{f}}:=\einf_{z\in \mathbb{T}} \lambda_{\text{min}}[\mathbf{F}^*(z) \mathbf{F}(z)]=\einf_{w\in [-\pi,\pi]} 
\big(|\widehat{\mathsf{f}}_{1}(e^{iw})|- |\widehat{\mathsf{f}}_{-1}(e^{iw})|\big)^2,
\\
&B_{\mathsf{f}}:=\esup_{z\in \mathbb{T}} \lambda_{\text{max}}[\mathbf{F}^*(z) \mathbf{F}(z)]=\esup_{w\in [-\pi,\pi]}\big(|\widehat{\mathsf{f}}_{1}(e^{iw})|+|\widehat{\mathsf{f}}_{-1}(e^{iw})|\big)^2\,.
\end{split}
\end{equation}

Closing this section we exhibit a simple example. The generators $\mathsf{f}$ and $\mathsf{g}$  are  finitely supported  whenever  $\widehat{\mathsf{f}}_{1}$ and $\widehat{\mathsf{f}}_{-1}$ are Laurent polynomials such that 
$\det \mathbf{F}(z)=\widehat{\mathsf{f}}_{1}(z)\widehat{\mathsf{f}}_{1}(z^{-1})-\widehat{\mathsf{f}}_{-1}(z)\widehat{\mathsf{f}}_{-1}(z^{-1})=a z^k$ for some $k\in \ZZ$, $a\neq 0$. 
For instance, for $\widehat{\mathsf{f}}_{1}(z)=3/8$ and $\widehat{\mathsf{f}}_{-1}(z)=z/8$ we obtain $\widehat{\mathsf{g}}_{1}(z)=3$ and $\widehat{\mathsf{g}}_{-1}(z)=-z$; thus the dual generators $\mathsf{f}$ and $\mathsf{g}$ have both support of size 2. 
From \eqref{rb}, the optimal Riesz bounds of $\big\{L_{\gamma}\mathsf{f}\big\}_{\gamma\in \Gamma}$ are $A_{\mathsf{f}}=1/4$ and $B_{\mathsf{f}}=1/2$.

\section{A sampling application}
\label{section4}
Let $\Gamma \ni \gamma \longmapsto  U(\gamma) \in \mathcal{U}(\mathcal{H})$ be a unitary representation of the group $\Gamma$ in a separable Hilbert space 
$\mathcal{H}$, i.e., a homomorphism between $\Gamma$ and $\mathcal{U}(\mathcal{H})$. We are interested in the study of sampling in the principal $U$-invariant space $\Ac_\varphi:=\overline{\espan}\{U(\gamma)\varphi\}_{\gamma \in \Gamma}$ of $\Hc$, where $\varphi$ denotes a fixed element of $\Hc$. In case the sequence $\{U(\gamma)\varphi\}_{\gamma\in \Gamma}$ is a Riesz sequence for $\Hc$ (one can find necessary and sufficient conditions in Refs.~\cite{barbieri:15,manning:12}) the subspace $\Ac_\varphi$ can be expressed as
 \[
\Ac_\varphi= \Big\{ \sum_{\gamma\in \Gamma} \mathsf{a}(\gamma) U(\gamma)\varphi \,:\,
\mathsf{a}=\{\mathsf{a}(\gamma)\}_{\gammaÊ\in \Gamma} \in \ell^2(\Gamma)\Big\}
\]
The goal here is the stable recovery of any $f\in \Ac_\varphi$ from the data $\{\Lc_\psi f(\gamma)\}_{\gamma \in \Gamma}$ given by
\begin{equation}
\label{samples}
\mathcal{L}_\psi f(\gamma):=\big\langle f, U(\gamma)\psi \big\rangle_\mathcal{H}\,,\quad \gamma\in \Gamma\,,
\end{equation}
where $\psi\in \Hc$ is a fixed element which does not belong necessarily to $\Ac_\varphi$. First, we express the samples in a more suitable manner; for each  $f=\sum_{\gamma\in \Gamma} \mathsf{a}(\gamma) U(\gamma)\varphi$ in $\Ac_\varphi$ we have
\[
\begin{split}
\mathcal{L}_\psi f(\gamma)&=\Big\langle \sum_{\eta\in \Gamma} \mathsf{a}(\eta) U(\eta)\varphi, U(\gamma)\psi \Big\rangle_\mathcal{H}= \sum_{\eta\in \Gamma}  \mathsf{a}(\eta) \big\langle  \varphi, U(\eta^{-1}\gamma)\psi \big\rangle_\mathcal{H} \\
&=\sum_{\eta\in \Gamma}  \mathsf{a}(\eta) \overline{ \mathsf{f}(\gamma^{-1}\eta)}=
\sum_{\eta\in \Gamma}  \mathsf{a}(\eta) \overline{L_\gamma\mathsf{f}(\eta)} =\big\langle \mathsf{a}, L_\gamma\mathsf{f}_\psi \big\rangle_{\ell^2(\Gamma)}\,,\quad \gamma \in \Gamma\,,
\end{split}
\]
where $\mathsf{f}_\psi(\eta):=\langle \overline{\varphi, U(\eta^{-1}) \psi} \rangle_\Hc$ for $\eta \in \Gamma$, and $\mathsf{a}=\{\mathsf{a}(\gamma)Ê\}_{\gammaÊ\in \Gamma}$. Notice that $\mathsf{f}_\psi$ belongs to $\ell^2(\Gamma)$. In the light of Theorem \ref{riesz}, assume that 
$\{L_\gamma\mathsf{f}_\psi\}_{\ell^2(\Gamma)}$ is a Riesz basis for $\ell^2(\Gamma)$ with dual Riesz basis $\{L_\gamma\mathsf{g}_\psi\}_{\ell^2(\Gamma)}$. Thus, for any $\mathsf{a}\in \ell^2(\Gamma)$ we have 
\begin{equation}
\label{expa}
\mathsf{a}=\sum_{\gamma\in \Gamma} \langle \mathsf{a}, L_\gamma\mathsf{f}_\psi \rangle_{\ell^2(\Gamma)}\, L_\gamma\mathsf{g}_\psi=
\sum_{\gamma\in \Gamma} \mathcal{L}_\psi f(\gamma)\, L_\gamma\mathsf{g}_\psi\quad \text{ in $\ell^2(\Gamma)$}\,.
\end{equation}
In order to derive a sampling formula in $\Ac_\varphi$ compatible with its structure, we consider the natural isomorphism $\Tc_{U,\varphi} : \ell^2(\Gamma) \rightarrow  \Ac_\varphi$ which maps the usual orthonormal basis $\{\boldsymbol{\delta}_\gamma\}_{\gamma\in \Gamma}$ for $\ell^2(\Gamma)$ onto the Riesz basis $\{U(\gamma)\varphi\}_{\gamma\in \Gamma}$ for $\Ac_\varphi$. This isomorphism satisfies the following {\em shifting property}:
\[
\Tc_{U,\varphi} \big(L_\gamma\mathsf{f} \big)=U(\gamma) \Tc_{U,\varphi}\mathsf{f}\quad \text{for each $\mathsf{f}\in \ell^2(\Gamma)$ and $\gamma \in \Gamma$}\,.
\]
Indeed, we have that $L_\gamma \boldsymbol{\delta}_\eta=\boldsymbol{\delta}_{\gamma \eta}$ for $\gamma, \eta \in \Gamma$. As a consequence, 
$\Tc_{U,\varphi} \big(L_\gamma \boldsymbol{\delta}_\eta\big) =\Tc_{U,\varphi} \boldsymbol{\delta}_{\gamma \eta}= U(\gamma) U(\eta)\varphi= U(\gamma)\Tc_{U,\varphi}\big( \boldsymbol{\delta}_{\eta}\big)$.
From a  continuity argument the result becomes true for any $\mathsf{f}\in \ell^2(\Gamma)$. 

\medskip

Now, consider $f=\Tc_{U,\varphi}(\mathsf{a})$ in $\Ac_\varphi$;  applying the isomorphism $\Tc_{U,\varphi}$ in expansion \eqref{expa} and using the above shifting property we obtain for each $f\in \Ac_\varphi$ the sampling formula 
\begin{equation}
\label{sampexp}
f=\Tc_{U,\varphi}(\mathsf{a})=\sum_{\gamma\in \Gamma} \mathcal{L}_\psi f(\gamma)\, \Tc_{U,\varphi}\big(L_\gamma\mathsf{g}_\psi\big)=\sum_{\gamma \in \Gamma} \mathcal{L}_\psi f(\gamma)\, U(\gamma)\Tc_{U,\varphi}(\mathsf{g}_\psi)\quad \text{in $\Hc$}\,.
\end{equation}
Notice that $\Tc_{U,\varphi}(\mathsf{g}_\psi)=\sum_{\gamma \in \Gamma}\mathsf{g}_\psi(\gamma)\,U(\gamma)\varphi \in\Ac_\varphi$.
Moreover, since $\Tc_{U,\varphi}$ is an isomorphism, the sequence $\{U(\gamma)\Tc_{U,\varphi}(\mathsf{g}_\psi)\}_{\gamma \in \Gamma}$ is a Riesz basis for $\Ac_\varphi$. In fact, the following sampling theorem in $\Ac_\varphi$ holds:
\begin{teo}
\label{sampriesz}
For a given $\psi\in \Hc$, consider $\mathsf{f}_\psi \in \ell^2(\Gamma)$ such that $\mathsf{f}_\psi(\eta):=\langle \overline{\varphi, U(\eta^{-1}) \psi} \rangle_\Hc$ for $\eta \in \Gamma$. Assume that all the entries of its associated $\kappa \times \kappa$ matrix-valued function  $\mathbf{F}(\xi)$ defined in \eqref{F} belong to 
$L^\infty(\widehat{N})$. The following statements are equivalent:
\begin{enumerate}[(a)]
\item ${\displaystyle \einf_{\xi\in \widehat{N}} |\det \mathbf{F}(\xi)| >0}$.
\item There exists a unique $\mathsf{g}_\psi\in \ell^2(\Gamma)$  such that its associate matrix-valued function $\mathbf{G}(\xi)$ defined in \eqref{F} has entries in 
$L^\infty(\widehat{N})$, and it satisfies $\mathbf{G}(\xi)\mathbf{F}^*(\xi)=\mathbf{I}_\kappa$,\, a.e.  $\xi\in \widehat{N}$.
\item There exists a unique $\Phi_\psi\in \Ac_\varphi$ such that the sequence $\{U(\gamma)\Phi_\psi\}_{\gamma \in \Gamma}$ is a Riesz basis for $\Ac_\varphi$ and the sampling formula 
\begin{equation}
\label{sformula}
f=\sum_{\gamma \in \Gamma} \mathcal{L}_\psi f(\gamma)\, U(\gamma)\Phi_\psi \quad \text{ in $\Hc$}
\end{equation}
holds for each $f\in \Ac_\varphi$.
\end{enumerate}
In case the equivalent conditions are satisfied, necessarily $\Phi_\psi=\Tc_{U,\varphi}(\mathsf{g}_\psi)$ where $\mathsf{g}_\psi \in \ell^2(\Gamma)$ satisfies conditions in $(b)$. Moreover, the interpolation property $\Lc_\psi \Phi_\psi(\gamma)=\delta_{\gamma, e_\Gamma}$, $\gamma \in \Gamma$, holds.
\end{teo}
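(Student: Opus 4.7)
The plan is to establish the cycle $(a) \Leftrightarrow (b) \Rightarrow (c) \Rightarrow (a)$ and then read off the interpolation property. Throughout I will lean on Theorem \ref{riesz}, Corollary \ref{dual}, and on the isomorphism $\Tc_{U,\varphi}$ together with its shifting property, both already set up in the paragraphs preceding the statement.

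For $(a) \Leftrightarrow (b)$, under the standing hypothesis that the entries of $\mathbf{F}$ lie in $L^\infty(\widehat{N})$, condition $(a)$ is, by Theorem \ref{riesz}, equivalent to $\{L_\gamma \mathsf{f}_\psi\}_{\gamma\in\Gamma}$ being a Riesz basis for $\ell^2(\Gamma)$. A cofactor computation then shows that $[\mathbf{F}^*(\xi)]^{-1}$ has entries in $L^\infty(\widehat{N})$; its first column defines, via Lemma \ref{T}, a unique $\mathsf{g}_\psi\in\ell^2(\Gamma)$, and Corollary \ref{dual} identifies $\mathbf{G}(\xi)=[\mathbf{F}^*(\xi)]^{-1}$. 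Conversely, $(b)$ forces $\mathbf{F}^*(\xi)$ to be invertible a.e., and the $L^\infty$ bound on the entries of $\mathbf{G}$ yields an essential positive lower bound on $|\det\mathbf{F}(\xi)|$ through $|\det\mathbf{F}|=|\det\mathbf{G}|^{-1}$.

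For $(a)\Rightarrow(c)$ I apply $\Tc_{U,\varphi}$ to the dual-Riesz-basis reconstruction formula \eqref{expa}; the shifting property used term-by-term produces \eqref{sformula} with $\Phi_\psi := \Tc_{U,\varphi}(\mathsf{g}_\psi)$, and since $\Tc_{U,\varphi}$ is a Hilbert-space isomorphism it maps the Riesz basis $\{L_\gamma \mathsf{g}_\psi\}_{\gamma\in\Gamma}$ to the Riesz basis $\{U(\gamma)\Phi_\psi\}_{\gamma\in\Gamma}$ for $\Ac_\varphi$. Uniqueness of $\Phi_\psi$ is automatic from uniqueness of coordinates in this basis. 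For $(c)\Rightarrow(a)$ I reverse the process: set $\mathsf{g}_\psi := \Tc_{U,\varphi}^{-1}(\Phi_\psi)\in\ell^2(\Gamma)$. The shifting property turns the Riesz basis $\{U(\gamma)\Phi_\psi\}$ for $\Ac_\varphi$ back into a Riesz basis $\{L_\gamma \mathsf{g}_\psi\}$ for $\ell^2(\Gamma)$, and the sampling identity \eqref{sformula}, applied to $f=\Tc_{U,\varphi}(\mathsf{a})$, translates into $\mathsf{a}=\sum_\gamma \langle \mathsf{a}, L_\gamma\mathsf{f}_\psi\rangle\, L_\gamma\mathsf{g}_\psi$ in $\ell^2(\Gamma)$. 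This identifies $\{L_\gamma\mathsf{f}_\psi\}$ as the biorthogonal system to $\{L_\gamma\mathsf{g}_\psi\}$, hence itself a Riesz basis, and Theorem \ref{riesz} returns $(a)$. The interpolation property then falls out of biorthogonality of dual Riesz bases:
\[
\Lc_\psi \Phi_\psi(\gamma)=\langle \mathsf{g}_\psi, L_\gamma \mathsf{f}_\psi \rangle_{\ell^2(\Gamma)}=\delta_{\gamma,e_\Gamma},\quad \gamma\in\Gamma.
\]

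The main obstacle I anticipate lies in $(c)\Rightarrow(a)$: one must argue that the sampling formula, which a priori merely expresses each $f\in\Ac_\varphi$ in the Riesz basis $\{U(\gamma)\Phi_\psi\}$ with \emph{some} coefficients, is actually producing the coefficients of the \emph{biorthogonal} functionals $L_\gamma\mathsf{f}_\psi$. This is precisely what forces $\{L_\gamma\mathsf{f}_\psi\}$ to be a Riesz basis rather than just a Bessel sequence, and it rests on the uniqueness of Riesz-basis expansions applied after transporting everything to $\ell^2(\Gamma)$ through $\Tc_{U,\varphi}^{-1}$. Once this point is pinned down, the remaining verifications are direct applications of the machinery established in Section \ref{section3}.
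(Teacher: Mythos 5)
Your proposal is correct and follows essentially the same route as the paper's proof: Theorem \ref{riesz} and Corollary \ref{dual} for the equivalence of $(a)$ and $(b)$, transport of the dual-basis expansion \eqref{expa} through $\Tc_{U,\varphi}$ via the shifting property for $(c)$, and for $(c)\Rightarrow(a)$ the pull-back by $\Tc_{U,\varphi}^{-1}$ followed by the observation that $\{L_\gamma\mathsf{f}_\psi\}$ is a Bessel sequence biorthogonal to a Riesz basis, hence itself a Riesz basis. The only difference is organizational (the paper runs the cycle $(a)\Rightarrow(b)\Rightarrow(c)\Rightarrow(a)$ rather than proving $(a)\Leftrightarrow(b)$ separately), which is immaterial.
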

\begin{proof}

\noindent $(a)\Rightarrow(b)$. The sequence $\{L_\gamma\mathsf{f}_\psi\}_{\ell^2(\Gamma)}$ is a Riesz basis for $\ell^2(\Gamma)$. Having in mind Theorem \ref{riesz}, its dual Riesz basis has the form $\{L_\gamma\mathsf{g}_\psi\}_{\ell^2(\Gamma)}$ with $\mathbf{G}(\xi)\mathbf{F}^*(\xi)=\mathbf{I}_\kappa$,\, a.e.  
$\xi\in \widehat{N}$.

\noindent $(b)\Rightarrow(c)$. According with Corollary \ref{dual}, the sequences $\{L_\gamma\mathsf{f}_\psi\}_{\ell^2(\Gamma)}$ and $\{L_\gamma\mathsf{g}_\psi\}_{\ell^2(\Gamma)}$ form a pair of dual Riesz bases for $\ell^2(\Gamma)$. Thus we have \eqref{expa} and, consequently, \eqref{sampexp} proves $(c)$.

\noindent $(c)\Rightarrow(a)$. Applying the isomorphism $\Tc_{U,\varphi}^{-1}$, the sequence $\big\{\Tc_{U,\varphi}^{-1}\big( U(\gamma)\Phi\big)\big\}_{\gamma \in \Gamma}$ is a Riesz sequence for $\ell^2(\Gamma)$, and for each $\mathbf{a}\in \ell^2(\Gamma)$ we get 
\[
\mathsf{a}=\sum_{\gamma\in \Gamma} \langle \mathsf{a}, L_\gamma\mathsf{f}_\psi \rangle_{\ell^2(\Gamma)}\,\Tc_{U,\varphi}^{-1}\big( U(\gamma)\Phi_\psi\big)\quad \text{ in $\ell^2(\Gamma)$}\,.
\]
The sequence $\{L_\gamma\mathsf{f}_\psi\}_{\ell^2(\Gamma)}$ is a Bessel sequence biorthogonal to $\big\{\Tc_{U,\varphi}^{-1}\big( U(\gamma)\Phi_\psi\big)\big\}_{\gamma \in \Gamma}$, and hence it is a Riesz basis for $\ell^2(\Gamma)$ \cite[Theorem 3.6.7]{ole:16}; from Theorem \ref{riesz}, ${\displaystyle \einf_{\xi\in \widehat{N}} |\det\mathbf{F}(\xi)| >0}$.

The uniqueness of the coefficients in a Riesz basis expansion gives the interpolation property $\Lc_\psi \Phi_\psi(\gamma)=\delta_{\gamma, e_\Gamma}$, $\gamma \in \Gamma$.
\end{proof}
\subsection{The crystallographic group case}
The {\em euclidean motion group} $E(d)$ is the semidirect product $\RR^d \rtimes_{\sigma} O(d)$ corresponding to the homomorphism $\sigma : O(d) \rightarrow Aut(\RR^d)$ given by $\sigma(A)(x) = Ax$, where $A\in O(d)$ and $x\in \RR^d$. The composition law on $E(d) = \RR^d \rtimes_{\sigma} O(d)$ reads $(x, A) \cdot  (x', A') = (x + Ax', AA')$.

Let $\Mc$ be a non-singular $d\times d$ matrix and $H$ a finite subgroup of $O(d)$ of order $\kappa$ such that  $A(\Mc\ZZ^d)=\Mc\ZZ^d$ for each $A\in H$. We consider the {\em crystallographic group} $\Gamma_{\Mc,H}:=\Mc\ZZ^d \rtimes_\sigma H$ and its {\em quasi regular representation} (see, for instance, Ref.~\cite{barbieri:15}) on $L^2(\RR^d)$
\[
U(n,A)f(t)=f[A^{\top}(t-n)]\,,\quad \text{$n\in \Mc\ZZ^d$, $A\in H$ and $f\in L^2(\RR^d)$}\,.
\]
For a fixed $\varphi \in L^2(\RR^d)$ such that the sequence $\big\{ U(n,A)\varphi \big\}_{(n,A)\in \Gamma_{\Mc,H}}$ is a Riesz sequence for $L^2(\RR^d)$ (see, for instance, Refs.\cite{boor:93,jia:91}) we consider the $U$-invariant subspace in $L^2(\RR^d)$
\[
\Ac_\varphi=\Big\{\sum_{(n,A)\in \Gamma_{\Mc,H}} \alpha(n,A)\, \varphi [A^{\top}(t-n)] \,\,:\,\, \{\alpha(n,A)\}\in \ell^2(\Gamma_{\Mc,H})\Big\}
\]
For a fixed $\psi \in  L^2(\RR^d)$ non necessarily in $\Ac_\varphi$ we consider the average samples of any $f\in \Ac_\varphi$
\[
\Lc_\psi f(n,A)=\big\langle f, U(n, A) \psi \big\rangle_{L^2(\RR^d)}=\big\langle f, \psi [A^\top(\cdot-n)]\big\rangle_{L^2(\RR^d)}\,,\quad (n,A)\in \Gamma_{\Mc,H}\,.
\]
Under the hypotheses in Theorem \ref{sampriesz}, there exists a function $\Phi_\psi \in \Ac_\varphi$ such that the sequence $\big\{\Phi_\psi [A^\top(t-n)]\big\}_{(n,A)\in \Gamma_{\Mc,H}}$ is a Riesz basis for $\Ac_\varphi$, and for each $f\in \Ac_\varphi$ we have the sampling expansion
\begin{equation}
\label{rcristal}
f(t)=\sum_{(n,A)\in \Gamma_{\Mc,H}} \big\langle f, \psi [A^\top(\cdot-n)]\big\rangle_{L^2(\RR^d)}\,\Phi_\psi [A^\top(t-n)] \quad \text{in $L^2(\RR^d)$}\,.
\end{equation}
If the generator $\varphi \in C(\RR^d)$ and the function $t\mapsto \sum_{(n,A)}|\varphi [A^\top(t-n)]|^2$ is bounded on $\RR^d$, a standard argument shows that $\Ac_\varphi$ is a reproducing kernel Hilbert space (RKHS) of continuous bounded functions in $L^2(\RR^d)$. As a consequence, convergence in $L^2(\RR^d)$-norm implies pointwise convergence which is absolute and uniform on 
$\RR^d$.

\medskip
\subsection{The pointwise samples case}
Let $\{U(\gamma)\}_{\gamma \in \Gamma}$ be a unitary representation of the group $\Gamma=N\rtimes_\sigma H$ on the Hilbert space $\Hc=L^2(\RR^d)$. If the generator $\varphi\in L^2(\RR^d)$ satisfies that, for each $\gamma \in \Gamma$, the function $U(\gamma)\varphi$ is continuous on $\RR^d$, and the condition
\begin{equation}
\label{bounded}
\sup_{t\in \RR^d} \sum_{\gamma \in \Gamma} \big|[U(\gamma)\varphi](t)\big|^2<+\infty\,,
\end{equation}
then the subspace $\Ac_\varphi$ is a RKHS of continuous bounded functions in $L^2(\RR^d)$. In fact, the following result holds:
\begin{lema}
For any $\{\mathsf{a}(\gamma)\}_{\gammaÊ\in \Gamma} \in \ell^2(\Gamma)$ the series $\sum_\gamma \mathsf{a}(\gamma)\, [U(\gamma)\varphi](t)$ converges pointwise to a  continuous bounded function if and only if for each $\gamma \in \Gamma$, the function $U(\gamma)\varphi$ is continuous on $\RR^d$, and condition \eqref{bounded} holds.
\end{lema}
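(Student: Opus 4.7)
The plan is to prove both implications separately, exploiting the Hilbert space structure of $\ell^2(\Gamma)$ and two successive applications of the uniform boundedness principle.

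For the sufficiency, suppose each $U(\gamma)\varphi$ is continuous and that $M:=\sup_{t\in\RR^d}\sum_{\gamma\in\Gamma}|[U(\gamma)\varphi](t)|^2<+\infty$. Given $\mathsf{a}\in\ell^2(\Gamma)$, a Cauchy--Schwarz estimate on tails gives, for any finite $F\subset\Gamma$,
\[
\Big|\sum_{\gamma\notin F}\mathsf{a}(\gamma)\,[U(\gamma)\varphi](t)\Big|\le\Big(\sum_{\gamma\notin F}|\mathsf{a}(\gamma)|^2\Big)^{1/2}\sqrt{M}\,,
\]
uniformly in $t$. Since $\mathsf{a}\in\ell^2(\Gamma)$ makes the first factor arbitrarily small by choosing $F$ large, the series converges uniformly on $\RR^d$. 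As a uniform limit of continuous functions (the partial sums), it is continuous; the bound $\|\mathsf{a}\|_{\ell^2(\Gamma)}\sqrt{M}$ gives boundedness.

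For the necessity, assume the series converges pointwise to a continuous bounded function for every $\mathsf{a}\in\ell^2(\Gamma)$. Plugging $\mathsf{a}=\boldsymbol{\delta}_\gamma$ recovers $[U(\gamma)\varphi](t)$ as a continuous function, which gives continuity of each $U(\gamma)\varphi$. To obtain \eqref{bounded}, I proceed in two uniform--boundedness steps. First fix $t\in\RR^d$: the partial--sum functionals $\ell_t^F(\mathsf{a}):=\sum_{\gamma\in F}\mathsf{a}(\gamma)[U(\gamma)\varphi](t)$ are bounded on $\ell^2(\Gamma)$ with norm $\bigl(\sum_{\gamma\in F}|[U(\gamma)\varphi](t)|^2\bigr)^{1/2}$, and by hypothesis $\sup_F|\ell_t^F(\mathsf{a})|<\infty$ for each $\mathsf{a}$. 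The Banach--Steinhaus theorem then forces $\sup_F\|\ell_t^F\|<\infty$, so $\{[U(\gamma)\varphi](t)\}_{\gamma\in\Gamma}\in\ell^2(\Gamma)$ and the limit functional $\ell_t(\mathsf{a})=\sum_\gamma \mathsf{a}(\gamma)[U(\gamma)\varphi](t)$ is bounded on $\ell^2(\Gamma)$ with $\|\ell_t\|^2=\sum_\gamma|[U(\gamma)\varphi](t)|^2$.

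Now let $t$ vary. Since the series defines a bounded function of $t$ for every $\mathsf{a}$, we have $\sup_{t\in\RR^d}|\ell_t(\mathsf{a})|<\infty$ for each $\mathsf{a}\in\ell^2(\Gamma)$. A second application of the uniform boundedness principle, to the family $\{\ell_t\}_{t\in\RR^d}\subset[\ell^2(\Gamma)]^*$, yields $\sup_{t\in\RR^d}\|\ell_t\|<\infty$, which is exactly \eqref{bounded}. The main delicate point is justifying the passage from pointwise convergence of the series (a priori merely $\mathsf{a}$-by-$\mathsf{a}$ and $t$-by-$t$) to the uniform square--summability statement; both uniform boundedness arguments are essential here, since neither absolute nor unconditional convergence is assumed in the hypothesis.
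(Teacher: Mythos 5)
Your proof is correct and follows essentially the same route as the paper's: Cauchy--Schwarz plus uniform convergence for sufficiency, delta sequences for continuity, and Banach--Steinhaus applied to the family $\{\Omega_t\}_{t\in\RR^d}$ (your $\{\ell_t\}$) for condition \eqref{bounded}. The only difference is that you make explicit, via a first application of the uniform boundedness principle to the partial-sum functionals, the fact that pointwise convergence of $\sum_\gamma \mathsf{a}(\gamma)c(\gamma)$ for all $\mathsf{a}\in\ell^2(\Gamma)$ forces $c\in\ell^2(\Gamma)$ with $\|\ell_t\|^2=\sum_\gamma|c(\gamma)|^2$ --- a step the paper simply cites from the literature.
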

\begin{proof}
Cauchy-Schwarz inequality and Weierstrass M-test prove the sufficient condition. To prove the necessary condition we follow the arguments in \cite{zhou:99}. Indeed, notice first that choosing the delta sequences in $\ell^2(\Gamma)$ we deduce that each function $U(\gamma)\varphi$ is continuous on $\RR^d$. 

For each fixed $t\in \RR^d$, since the series 
$\sum_{\gammaÊ\in \Gamma} \mathsf{a}(\gamma)\, [U(\gamma)\varphi](t)$ converges for any $\{\mathsf{a}(\gamma)\}_{\gammaÊ\in \Gamma}$ in $\ell^2(\Gamma)$, we obtain that 
$\sum_{\gamma \in \Gamma} \big|[U(\gamma)\varphi](t)\big|^2<+\infty$. Moreover, the functional $\Omega_t: \ell^2(\Gamma) \rightarrow \CC$ defined as 
$\Omega_t \mathsf{a}:=\sum_{\gammaÊ\in \Gamma} \mathsf{a}(\gamma)\, [U(\gamma)\varphi](t)$ is bounded with norm $\|\Omega_t\|^2=\sum_{\gammaÊ\in \Gamma}  \big|[U(\gamma)\varphi](t)\big|^2$ (see, for instance, \cite[p.145]{heuser:82}). Next, for fixed $\mathsf{a}=\{\mathsf{a}(\gamma)\}_{\gammaÊ\in \Gamma} \in \ell^2(\Gamma)$ we consider its associated  function $f_\mathsf{a}(t):=\sum_{\gammaÊ\in \Gamma} \mathsf{a}(\gamma)\, [U(\gamma)\varphi](t)$, $t\in \RR^d$. Since $f_\mathsf{a}$ is bounded on $\RR^d$, we get $\sup_{t\in \RR^d} |\Omega_t \mathsf{a}|=\sup_{t\in \RR^d} |f_\mathsf{a}(t)|<+\infty$. Hence, Banach-Steinhaus theorem concludes that 
\[
\sup_{t\in \RR^d} \|\Omega_t \|=\sup_{t\in \RR^d}\big(\sum_{\gammaÊ\in \Gamma} \big|[U(\gamma)\varphi](t)\big|^2 \big)^{1/2}<+\infty\,.
\]
\end{proof}

Now for a fixed point $p\in \RR^d$ we consider, for each $f\in \Ac_\varphi$, the new samples given by
\begin{equation}
\label{samples2}
\Lc_pf(\gamma):=\big[U(\gamma^{-1})f\big](p)\,,\quad \gamma \in \Gamma\,.
\end{equation}
For each $f=\sum_{\eta\in \Gamma} \mathsf{a}(\eta) U(\eta)\varphi$ in $\Ac_\varphi$ we get
\[
\begin{split}
\Lc_pf(\gamma)&=\Big[\sum_{\eta\in \Gamma} \mathsf{a}(\eta)\,U(\gamma^{-1}\eta)\,\varphi\Big](p)=
\sum_{\eta\in \Gamma} \mathsf{a}(\eta)\big[U(\gamma^{-1}\eta)\,\varphi\big](p)\\
&=\sum_{\eta\in \Gamma} \mathsf{a}(\eta)\overline{\mathsf{f}_p(\gamma^{-1}\eta)}=\big\langle \mathsf{a}, L_\gamma\mathsf{f}_p\big\rangle_{\ell^2(\Gamma)}\,,\quad \gamma \in \Gamma\,,
\end{split}
\]
where  $\mathsf{f}_p(\eta):=\overline{\big[U(\eta)\varphi\big]}(p)$, $\eta \in \Gamma$; notice that $\mathsf{f}_p$ belongs to $\ell^2(\Gamma)$. As a consequence, under the hypotheses in Theorem \ref{sampriesz} on this new $\mathsf{f}_p\in \ell^2(\Gamma)$, a sampling formula as \eqref{sformula} holds for the  data sequence $\big\{ \Lc_p f(\gamma)\big\}_{\gamma \in \Gamma}$.

\medskip

In the particular case of the quasi regular representation of a crystallographic group $\Gamma_{\Mc,H}=\Mc\ZZ^d\rtimes_\sigma H$, for each $f\in \Ac_\varphi$ its samples \eqref{samples2} are the pointwise samples
\[
\Lc_pf(n,A)=\big[U[(n,A)^{-1}]f\big](p)=\big[U(-A^\top n,A^\top)f \big](p)=f(Ap+n)\,, \quad (n,A)\in \Gamma\,.
\]
Thus (under hypotheses in Theorem \ref{sampriesz}), there exists a unique function $\Phi_p\in \Ac_\varphi$ such that for each $f\in \Ac_\varphi$ the sampling formula
\begin{equation}\label{sf}
f(t)=\sum_{(n,A)\in \Gamma} f(Ap+n)\,\Phi_p[A^\top(t-n)]\,,\quad t\in \RR^d
\end{equation}
holds. The convergence of the series in $L^2(\RR^d)$-norm implies pointwise convergence which is absolute and uniform on $\RR^d$. The interpolating function  
$\Phi_p=\Tc_{U,\varphi}(\mathsf{g}_p)$ where $\mathsf{g}_p$ is the generator of the dual Riesz basis (see Theorem \ref{riesz}). Coefficients in the expansion  
$f=\sum_{\gamma\in \Gamma} \mathsf{a}(\gamma) U(\gamma)\varphi$ can be computed from the samples as
\begin{equation}
\label{expa1}
\mathsf{a}(\gamma)=\sum_{\eta\in \Gamma} f(Ap+n)\, L_\eta\mathsf{g}_p(\gamma)
\end{equation}
\subsection{An example involving the infinite dihedral group $D_{\infty}$}

To illustrate the results in this section  we  consider group $\Gamma=D_{\infty}$, a unidimensional crystallographic group, and a real continuous generator 
$\varphi \in L^2(\RR)$ supported in the interval $[0,2]$. Notice that we can check if a system $\{U(\gamma)\varphi(t)\}_{\gamma\in D_{\infty}}=\{\varphi(t-n)\}_{n\in \ZZ}\cup \{ \varphi(n-t)\}_{n\in \ZZ}$ is a Riesz basis for $\mathcal{A}_{\varphi}=\big\{\sum_{n\in \ZZ}a(n)\varphi(t-n)+b(n)\varphi(n-t): a,b\in \ell^2(\ZZ)\big\}$ by using
the Gramian condition (see, for instance, Refs.~\cite{boor:93,jia:91}). For instance, the generator $\varphi(t)=(16 t^2-13)t^2(2-t)^2 \chi_{[0,2]}(t)$,  $t\in \RR$, 
fulfills these requirements. 

\medskip

The aim here is the recovery of any $f\in \mathcal{A}_{\varphi}$ from its samples $\{f(n+p)\}_{n\in \ZZ}\cup \{ f(n-p)\}_{n\in \ZZ}$ with a fixed $p\in (0,1/2)$.  We proceed to check condition $(a)$ in Theorem \ref{sampriesz}. Indeed,  since supp $\varphi\subseteq [0,2]$, we obtain
 $\widehat{\mathsf{f}}_{1}(z)=\varphi(p)+\varphi(p+1)z$ and $\widehat{\mathsf{f}}_{-1}(z)=\varphi(1-p)z^{-1}+\varphi(2-p)z^{-2}$ and then
 (see Eq.\eqref{ej})
\[\det \mathbf{F}(z)=C+D(z+z^{-1})\,,\quad z\in \mathbb{T}\,,
\]
where $  C=\varphi(p)^2+\varphi(p+1)^2-\varphi(1-p)^2-\varphi(2-p)^2$ and $D=\varphi(p)\varphi(p+1)-\varphi(1-p)\varphi(2-p).$ Since
$\det \mathbf{F}(e^{iw})=C+2D\cos(w)$, whenever $|C|>2|D|$ the sampling formula \eqref{sf} holds. It reads \[
f(t)=\sum_{n\in \ZZ} \big\{f(n+p) \Phi_{p}(t-n)+
f(n-p) \Phi_{p}(n-t)\big\}\,,\quad t\in \RR\,, 
\]
where the interpolating function is  $\Phi_{p}(t)=\sum_{n\in\ZZ}\big\{\mathsf{g}_{1}(n)\varphi(t-n)+\mathsf{g}_{-1}(n)\varphi(n-t)\big\}$, \,$t\in \RR$,
with (see Eq.~\eqref{ej1})
\[
\widehat{\mathsf{g}}_{1}(z)=\frac{\varphi(p)+\varphi(p+1)z}{C+D(z+z^{-1})},\quad 
\widehat{\mathsf{g}}_{-1}(z)=-\frac{\varphi(1-p)z^{-1}+\varphi(2-p)z^{-2}}{C+D(z+z^{-1})}\,,\quad z\in \mathbb{T}\,.
\]
Note that, whenever $D=0$, the interpolanting function $\Phi_{p}$ has also compact support. For instance, for $p=1/4$, by choosing the generator
$\varphi(t)=(16 t^2-13)t^2(2-t)^2 \chi_{[0,2]}(t)$,  $t\in \RR$,  we obtain that $D=0$ and $C=\frac{3627}{64}$, and therefore the interpolating function
\[
\Phi_{p}(t)=\text{\scriptsize $\frac{64}{3627}$}\big[\varphi(p)\varphi(t)+\varphi(p+1)\varphi(t+1)-\varphi(1-p)\varphi(1-t)-\varphi(2-p)\varphi(2-t)\big]\,, \quad t\in \RR\,,
\]
has support $[-1,2]$.  Using Eqs.~\eqref{rb}, the computation of coefficients $\mathsf{a}(\gamma)$ in  \eqref{expa1} has condition number 
$(B_{\mathsf{f}}/A_{\mathsf{f}})^{1/2}\approx 4.82$. For this choice of $\varphi$, the $D_{\infty}$-invariant space $\mathcal{A}_{\varphi}$ is a subspace of the space of cardinal splines of degree 6 with nodes at $\ZZ$ and continuous derivative. 
\section{A $C^*$-algebra connection}
\label{section5}
It is known that the Banach space $\ell^1(\Gamma)$ becomes a Banach $\ast$-algebra under convolution but it is not a $C^*$-algebra. To avoid this drawback, it can be used the group $C^*$-algebra of $\Gamma$ denoted by $C^*(\Gamma)$; it is the completion of $\ell^1(\Gamma)$ with respect to the norm
$\|\mathsf{f}\|=\|\Lambda_\mathsf{f}\|_{\mathcal{B}(\ell^2(\Gamma))}$ \cite[II.10.2]{blackadar:06}. 

In Ref.~\cite{taylor:89}  it is proved that the mapping $\mathsf{f}\mapsto \mathbf{F}$ in Definition \ref{defF} is a $C^*$-isomorphism between $C^*(\Gamma)$ and a $C^*$-subalgebra of $\mathcal{M}_{\kappa}\big(C(\widehat{N})\big)$, the $C^*$-algebra of the $\kappa\times \kappa$ matrices with continuous entries on $\widehat{N}$ (see also \cite{rieffel:80}). Thus, this $C^*$-isomorphism provides an explicit description for the group $C^*$-algebra of the semidirect product group $\Gamma$. 
 
Next, we show that for the semidirect product group $\Gamma=N\rtimes_{\sigma} H$, when $N$ is a discrete abelian group and $H$ a finite group, an alternative to  the group $C^*$-algebra of $\Gamma$ is the larger space 
\[
L^*(\Gamma):=\Big\{\, \mathsf{f}\in \ell^2(\Gamma)\,\, :\,\, \widehat{\mathsf{f}}_{h,l}\in L^\infty(\widehat{N}),\,\, h,l\in H \,\Big\}\,.
\]
In Theorem \ref{pe} below we will prove  that $L^*(\Gamma)$ is a $C^*$-algebra, and that the linear map $\Sc$ (see Definition \ref{defF})
\[
L^*(\Gamma) \ni  \mathsf{f} \xmapsto{\,\,\Sc\,\,} \mathbf{F}\in \mathcal{M}_{\kappa}\big(L^\infty(\widehat{N})\big)
\]
defines a  $C^*$-isomorphism, and consequently an isometry, between the $C^*$-algebra $L^*(\Gamma)$ and a $C^*$-subalgebra
of $\mathcal{M}_{\kappa}\big(L^\infty(\widehat{N})\big)$. Thus, the space $L^*(\Gamma)$ allows to consider, in a $C^*$-algebra setting, elements of $\ell^2(\Gamma)$ with discontinuous Fourier transform, such as ideal filters in signal processing applications. In case the group $\Gamma = \ZZ \rtimes_{\sigma} 1_{H}\cong \ZZ$, the space $L^*(\Gamma)$ coincides with the space $A'(\ZZ)$ of pseudomeasures \cite[3.1.8]{benedetto:96}.

 Specifically, by $\mathcal{M}_{\kappa}\big(L^\infty(\widehat{N})\big)$ we denote the  involution algebra  formed by the $\kappa\times \kappa$ matrices with entries in $L^\infty(\widehat{N})$, with pointwise addition and multiplication and where the involution is given by the adjoint matrix.
 Any $\mathbf{A}\in \mathcal{M}_{\kappa}\big(L^\infty(\widehat{N})\big)$ can be represented  by the bounded  operator $\pi_{\mathbf{A}}\in \mathcal{B}\big(L^2_{\kappa}(\widehat{N})\big)$ defined by
$\big(\pi_{\mathbf{A}}\mathbf{x}\big)(\xi):=\mathbf{A}(\xi)\mathbf{x}(\xi)$. With respect to the norm induced by this representation,
\[
\|\mathbf{A}\|_{\mathcal{M}_{\kappa}(L^\infty(\widehat{N}))}:=\|\pi_{\mathbf{A}}\|_{\mathcal{B}(L^2_{\kappa}(\widehat{N}))}=\sup \Big\{ \|\mathbf{A}(\cdot)\, \mathbf{x}(\cdot)\|_{L^2_{\kappa}(\widehat{N})}\, :\, \|\mathbf{x}\|_{L^2_{\kappa}(\widehat{N})}=1\Big\}\,,
\]
the  involution algebra $\mathcal{M}_{n}\big(L^\infty(\widehat{N})\big)$ is a $C^*$-algebra \cite[II.6.6]{blackadar:06}. An estimation for this norm can be found in Ref.\cite{benzi:14}.

\begin{lema}
\label{nn} 
For each $\mathbf{A}\in \mathcal{M}_{\kappa}\big(L^\infty(\widehat{N})\big)$ we have that $\|\mathbf{A}\|_{\mathcal{M}_{\kappa}(L^\infty(\widehat{N}))}= \esup_{\xi\in \widehat{N}}\|\mathbf{A}(\xi)\|_2$, where $\|\mathbf{A}(\xi)\|_2$ denotes the spectral norm of the matrix $\mathbf{A}(\xi)$.
\end{lema}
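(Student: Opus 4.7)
My plan is to prove the equality through two inequalities, writing $M := \esup_{\xi \in \widehat{N}} \|\mathbf{A}(\xi)\|_2$ for brevity. Before the estimates themselves, I would first justify that $\xi \mapsto \|\mathbf{A}(\xi)\|_2$ is a measurable scalar function, so that the essential supremum is well defined. Fixing a countable dense subset $D$ of the unit sphere of $\CC^\kappa$, the identity $\|\mathbf{A}(\xi)\|_2 = \sup_{v \in D} \|\mathbf{A}(\xi) v\|_{\CC^\kappa}$ holds by continuity of $B \mapsto \|Bv\|$ in $B$; each scalar function $\xi \mapsto \|\mathbf{A}(\xi) v\|$ is measurable because the entries of $\mathbf{A}$ lie in $L^\infty(\widehat{N})$, and hence $\|\mathbf{A}(\cdot)\|_2$ is measurable as a countable supremum of measurable functions.

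For $\|\pi_{\mathbf{A}}\|_{\mathcal{B}(L^2_\kappa(\widehat{N}))} \le M$, the estimate is pointwise: for a.e.\ $\xi$ and any $\mathbf{x} \in L^2_\kappa(\widehat{N})$,
\begin{equation*}
\|\mathbf{A}(\xi)\mathbf{x}(\xi)\|_{\CC^\kappa}^2 \le \|\mathbf{A}(\xi)\|_2^2\, \|\mathbf{x}(\xi)\|_{\CC^\kappa}^2 \le M^2\, \|\mathbf{x}(\xi)\|_{\CC^\kappa}^2,
\end{equation*}
and integrating against $d\mu_{\widehat{N}}$ gives $\|\pi_\mathbf{A}\mathbf{x}\|_{L^2_\kappa(\widehat{N})} \le M \|\mathbf{x}\|_{L^2_\kappa(\widehat{N})}$.

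For the reverse inequality $\|\pi_\mathbf{A}\| \ge M$, I would exploit the countable dense set $D$ to perform a measurable selection without any external machinery. Given any $c < M$, the set $\Omega_c := \{\xi \in \widehat{N} : \|\mathbf{A}(\xi)\|_2 > c\}$ has positive measure; by the representation above, $\Omega_c$ is the countable union of the measurable sets $E_v := \{\xi : \|\mathbf{A}(\xi) v\| > c\}$ for $v \in D$, so some single $E_v$ has positive measure. Choosing a measurable subset $\Omega' \subset E_v$ of positive finite measure and the test vector-valued function $\mathbf{x}(\xi) := \chi_{\Omega'}(\xi)\, v$, one obtains $\|\mathbf{x}\|^2_{L^2_\kappa(\widehat{N})} = \mu_{\widehat{N}}(\Omega')$ and
\begin{equation*}
\|\pi_\mathbf{A}\mathbf{x}\|^2_{L^2_\kappa(\widehat{N})} = \int_{\Omega'} \|\mathbf{A}(\xi) v\|^2_{\CC^\kappa}\, d\mu_{\widehat{N}}(\xi) > c^2\, \mu_{\widehat{N}}(\Omega'),
\end{equation*}
so $\|\pi_\mathbf{A}\| > c$ for every $c < M$, yielding the claim.

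The only delicate point I anticipate is the measurable selection, which I sidestep cleanly by picking a single constant vector $v$ from the countable dense set $D$ rather than appealing to any abstract selection theorem; once measurability of $\|\mathbf{A}(\cdot)\|_2$ is in hand, the rest reduces to routine pointwise estimates and integration.
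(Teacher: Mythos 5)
Your proof is correct. The paper itself gives only a one-line proof, referring back to the argument of Theorem~\ref{bessel}: there the upper bound is obtained exactly as in your pointwise estimate (via $\mathbf{x}^*(\xi)\mathbf{A}^*(\xi)\mathbf{A}(\xi)\mathbf{x}(\xi)\le\lambda_{\max}[\mathbf{A}^*(\xi)\mathbf{A}(\xi)]\,\|\mathbf{x}(\xi)\|^2$), but the sharpness is proved by taking a test function whose value at each $\xi$ in a positive-measure set $\Omega$ is a unit eigenvector of $\mathbf{A}^*(\xi)\mathbf{A}(\xi)$ for its largest eigenvalue. That construction tacitly requires a measurable selection of eigenvectors $\xi\mapsto v(\xi)$, a standard but nontrivial point which the paper does not address. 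Your lower bound replaces the eigenvector field by a single \emph{constant} vector $v$ drawn from a countable dense subset $D$ of the unit sphere, using $\|\mathbf{A}(\xi)\|_2=\sup_{v\in D}\|\mathbf{A}(\xi)v\|$ both to establish measurability of $\xi\mapsto\|\mathbf{A}(\xi)\|_2$ and to decompose $\{\|\mathbf{A}(\cdot)\|_2>c\}$ into countably many sets $E_v$, one of which must have positive measure. This is a genuinely cleaner route: it buys you a fully self-contained argument with no selection theorem and an explicit measurability justification, at no cost in length. (The finiteness of $\mu_{\widehat{N}}(\Omega')$ that you invoke is automatic here since $\widehat{N}$ is compact, $N$ being discrete, but your phrasing also covers the general $\sigma$-finite case.) Both approaches yield the same constant, since $\esup_\xi\lambda_{\max}[\mathbf{A}^*(\xi)\mathbf{A}(\xi)]=\big(\esup_\xi\|\mathbf{A}(\xi)\|_2\big)^2$.
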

\begin{proof} 
Since $\displaystyle \|\pi_{\mathbf{A}}\mathbf{x}\|^2_{L^2_{\kappa}(\widehat{N})}=
\int_{\widehat{N}}\mathbf{x}^*(\xi)\mathbf{A}^*(\xi)\mathbf{A}(\xi)\mathbf{x}(\xi) d\mu_{\widehat{N}}(\xi)$, the lemma
can be proved by means of the argument used to prove Theorem \ref{bessel} from equality \eqref{ce}.
\end{proof}

Alternatively, this lemma could be proved by checking that $\mathcal{M}_{\kappa}\big(L^\infty(\widehat{N})\big)$ with the norm $\|\mathbf{A}\|= \esup_{\xi\in \widehat{N}}\|\mathbf{A}(\xi)\|_2$ is a $C^*$-algebra, and having in mind the uniqueness of the  $C^*$-norm.

\begin{teo}
\label{pe} 
The vector space $L^*(\Gamma)$ under the convolution product, the involution defined by $ \mathsf{f}^*(\gamma)= \overline{\mathsf{f}(-\gamma)}$, $\gamma \in \Gamma$, and the norm $\| \mathsf{f}\|_{L^*(\Gamma)}=\|\mathbf{F}\|_{\mathcal{M}_{\kappa}(L^\infty(\widehat{N}))}$ becomes a $C^*$-algebra. Besides, the linear map $\mathcal{S}:\mathsf{f}\mapsto \mathbf{F}$ is a $C^*$-isomorphism  between $L^*(\Gamma)$ and a $C^*$-subalgebra
of $\mathcal{M}_{\kappa}\big(L^\infty(\widehat{N})\big)$. The transform $\mathcal{S}$ changes the order of the multiplication, i.e.,
 $\mathcal{S}( \mathsf{g}\ast  \mathsf{f})=\mathcal{S}(\mathsf{f})\mathcal{S}( \mathsf{g})$, $\mathsf{f}, \mathsf{g}\in L^*(\Gamma)$. 
\end{teo}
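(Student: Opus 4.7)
The plan is to transfer the $C^*$-algebra structure from $\mathcal{M}_{\kappa}\big(L^\infty(\widehat{N})\big)$ back to $L^*(\Gamma)$ via $\mathcal{S}$, so most of the work consists in verifying that $\mathcal{S}$ is an injective isometric $*$-antihomomorphism whose range is closed. Once this is done, the image is automatically a $C^*$-subalgebra and the pullback of its norm, product and involution endows $L^*(\Gamma)$ with the announced $C^*$-algebra structure.

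First I would verify the algebraic properties. Well-definedness is built into the definition of $L^*(\Gamma)$, linearity is clear, and injectivity follows from uniqueness of the Fourier transform together with the observation that $\mathsf{f}(n,h)$ already appears as $\mathsf{f}_{h,1_H}(n)$ in the first column of $\mathbf{F}$. The anti-multiplicativity $\mathcal{S}(\mathsf{g} \ast \mathsf{f})=\mathcal{S}(\mathsf{f})\mathcal{S}(\mathsf{g})$ is a direct specialization of formula \eqref{pro} of Theorem \ref{Tdomain} (take $\mathsf{a}=\mathsf{g}$, $\mathsf{b}=\mathsf{g}\ast\mathsf{f}$), and $\mathcal{S}(\mathsf{f}^*)=\mathbf{F}^*$ is the identification already employed in the proof of Theorem \ref{riesz}, checked by expanding $\mathsf{f}^*(\gamma)=\overline{\mathsf{f}(\gamma^{-1})}$ in the definition of the blocks $\mathsf{f}_{h,l}$. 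These two identities show that $L^*(\Gamma)$ is closed under convolution (the product $\mathbf{F}\mathbf{G}$ has $L^\infty$ entries and $\mathsf{g}\ast\mathsf{f}\in\ell^2(\Gamma)$ by the boundedness of $\Lambda_{\mathsf{f}}$ guaranteed by Theorem \ref{bessel}) and under involution. The isometry $\|\mathsf{f}\|_{L^*(\Gamma)}=\|\mathcal{S}(\mathsf{f})\|$ holds by definition, and the $C^*$-identity $\|\mathsf{f}^* \ast \mathsf{f}\|_{L^*(\Gamma)}=\|\mathsf{f}\|_{L^*(\Gamma)}^2$ is inherited from $\mathcal{M}_{\kappa}(L^\infty(\widehat{N}))$ via $\mathcal{S}(\mathsf{f}^* \ast \mathsf{f})=\mathbf{F}\mathbf{F}^*$ together with Lemma \ref{nn}.

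The main obstacle is the completeness of $L^*(\Gamma)$ under $\|\cdot\|_{L^*(\Gamma)}$, needed to conclude that the range of $\mathcal{S}$ is closed. Given a Cauchy sequence $(\mathsf{f}_n)$ in $L^*(\Gamma)$, the spectral norm dominates each matrix entry, so every sequence $(\widehat{\mathsf{f}}_{n,h,l})$ is Cauchy in $L^\infty(\widehat{N})$ with limit $\phi_{h,l}\in L^\infty(\widehat{N})$. Because $N$ is discrete, $\widehat{N}$ is compact, so the continuous embedding $L^\infty(\widehat{N})\hookrightarrow L^2(\widehat{N})$ delivers $L^2$-convergence, and Plancherel then yields $\mathsf{f}_{n,h}\to g_h$ in $\ell^2(N)$ for some $g_h$ with $\widehat{g_h}=\phi_{h,1_H}$. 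Setting $\mathsf{f}(n,h):=g_h(n)$ produces an element of $\ell^2(\Gamma)$ (as $H$ is finite) whose associated transfer matrix coincides with the $L^\infty$-limit $\mathbf{F}$ of $(\mathbf{F}_n)$, hence $\mathsf{f}\in L^*(\Gamma)$ and $\mathsf{f}_n\to\mathsf{f}$ in the $L^*$-norm. With completeness established, an isometric $*$-antihomomorphism from a complete normed $*$-algebra into a $C^*$-algebra automatically has closed range, so the image of $\mathcal{S}$ is a $C^*$-subalgebra of $\mathcal{M}_{\kappa}(L^\infty(\widehat{N}))$, concluding the proof. The delicate point hidden in the completeness argument is precisely the coherence check that the componentwise $L^\infty$-limits assemble into the transfer matrix of a genuine element of $\ell^2(\Gamma)$, which is where the compactness of $\widehat{N}$ is used crucially.
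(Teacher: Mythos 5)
Your proposal is correct and follows essentially the same route as the paper: both reduce the problem to showing that the range of $\mathcal{S}$ is closed (equivalently, that $L^*(\Gamma)$ is complete), and both do so by passing from $L^\infty(\widehat{N})$-convergence of the matrix entries to $L^2(\widehat{N})$-convergence via compactness of $\widehat{N}$, recovering a limit $\mathsf{f}\in\ell^2(\Gamma)$ from the first column by Plancherel, and then checking that the remaining blocks $\mathsf{f}_{h,l}$ match the entrywise limits. The coherence check you flag as delicate is exactly the step the paper spells out via $(\mathsf{f}_i)_{h,l}=\mathsf{f}_i(-\sigma_l(\cdot),l^{-1}h)\to\mathsf{f}_{h,l}$ in $\ell^2(N)$ and uniqueness of $L^2$-limits.
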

\begin{proof} 
We can easily check  that  $\mathcal{S}$ satisfies $\mathcal{S}(\mathsf{f}^*)=(\mathcal{S}\mathsf{f})^*$. According to Theorem \ref{Tdomain} we have that for any $\mathsf{f},\mathsf{g}\in L^*(\Gamma)$, $\mathcal{S}(\mathsf{g}\ast\mathsf{f})(\xi)=
\mathcal{S}\mathsf{f}(\xi)\mathcal{S}\mathsf{g}(\xi)$ a.e. $\xi \in \widehat{N}$. As $\| \mathsf{f}\|_{L^*(\Gamma)}=\|\mathbf{F}\|_{\mathcal{M}_{\kappa}(L^\infty(\widehat{N}))}=\| \mathcal{S}\mathsf{f}\|_{\mathcal{M}_{\kappa}(L^\infty(\widehat{N}))}$, we just need to prove that the range of $\mathcal{S}$ is closed in norm.

In so doing, let us consider
$\mathbf{F}_i= \mathcal{S}\mathsf{f}_i$,  with $\mathsf{f}_i\in L^*(\Gamma)$, and a matrix  
$\mathbf{A}\in \mathcal{M}_{\kappa}\big(L^\infty(\widehat{N})\big)$ such that
$\|\mathbf{F}_i -\mathbf{A}\|_{\mathcal{M}_{\kappa}(L^\infty(\widehat{N}))}\rightarrow 0$ as $i\mapsto\infty$. We have to prove that
$\mathbf{A}$ belongs to the range of $\mathcal{S}$. 

\noindent From Lemma \ref{nn}, $\esup_{\xi \in \widehat{N}} \|\mathbf{F}_i(\xi) -\mathbf{A}(\xi)\|_2\rightarrow 0$ as $i\mapsto\infty$. Having in mind that $\max_{h,l} |b_{h,l}|\le \|\mathbf{B}\|_2$ for any matrix $\mathbf{B}=[\,b_{h,l}\,]$, we obtain that  $\esup_{\xi \in \widehat{N}} |(\mathbf{F}_{n})_{h,l}(\xi) -\mathbf{A}_{h,l}(\xi)|\rightarrow 0$ and then  $\|(\mathbf{F}_{i})_{h,l}-\mathbf{A}_{h,l}\|_{L^\infty(\widehat{N})}\rightarrow 0$ as $i\mapsto\infty$.
Having in mind that $\widehat{N}$ is compact, we also have that  
\begin{equation}
\label{a}
\|(\mathbf{F}_{i})_{h,l}-\mathbf{A}_{h,l}\|_{L^2(\widehat{N})}\rightarrow 0\quad \text{as}\,\,\, i\mapsto \infty,\quad l,h\in H.
\end{equation}
On the other hand, since, for each $h\in H$,  $\mathbf{A}_{h,1_H}\in L^\infty(\widehat{N}) \subset L^2(\widehat{N})$, there exists a unique 
$\mathsf{f}\in \ell^2(\Gamma)$
such that the Fourier transform of  $\mathsf{f}(\cdot,h)$ is $\mathbf{A}_{h,1_H}$.

For any $h\in H$, the sequence $\mathsf{f}_i(\cdot,h)$ converges in $\ell^2(N)$ to $\mathsf{f}(\cdot,h)$ since, by \eqref{a}, its Fourier transform $(\mathbf{F}_i)_{h,1}$ converges in $L^2(\widehat{N})$ to  $\mathbf{A}_{h,1_H}$ the Fourier transform  of $\mathsf{f}(\cdot,h)$. Hence, for any $h,l\in H$, the sequence
  $(\mathsf{f}_i)_{h,l}=\mathsf{f}_i(-\sigma_l(\cdot),l^{-1}h)$ converges in $\ell^2(N)$ to  $\mathsf{f}(-\sigma_l(\cdot),l^{-1}h)=\mathsf{f}_{h,l}$ and then its Fourier transform $(\mathbf{F}_{i})_{h,l}$ converges in $L^2(\widehat{N})$ to $\widehat{\mathsf{f}}_{h,l}$. Thus, by  using \eqref{a} and the uniqueness of the limit we obtain that 
  $\widehat{\mathsf{f}}_{h,l}=\mathbf{A}_{h,l}$, $h,l\in H$, and then $\mathcal{S}\mathsf{f}=\mathbf{A}$.
 \end{proof} 

Theorem \ref{pe} gives a simple description of the convolution $C^*$-algebra $L^*(\Gamma)$. For example, from \eqref{ej}, the $C^*$-algebra $L^*(D_\infty)$ for the infinite dihedral group $D_\infty$ is $C^*$-isomorphic to the $C^*$-algebra of matrices of the type 
\[
\mathbf{A}(z)=\begin{bmatrix}f(z)&g(z^{-1})\\g(z)&f(z^{-1})\end{bmatrix}\,,\quad z\in \mathbb{T}\,,\quad \text{with}\quad f,g\in L^\infty(\mathbb{T})\,,
\]
and the norm $\|\mathbf{A}\|_{\mathcal{M}_{2}(L^\infty(\mathbb{T}))}= \esup_{z\in \mathbb{T}}\|\mathbf{A}(z)\|_2$. 

\medskip

Finally, it is worth to mention that it is possible to give an alternative proof of Theorem \ref{riesz} by using Theorem \ref{pe} and $C^*$-algebras techniques. \\

\medskip 
\noindent{\bf A new proof of Theorem \ref{riesz}}:
\begin{proof} 
Let $\mathcal{B}\big(\ell^2(\Gamma)\big)$ be the $C^*$-algebra of bounded linear operators on $\ell^2(\Gamma)$.
The linear map
\[
L^*(\Gamma) \ni  \mathsf{f} \xmapsto {\,\,\Lambda\,\,} \Lambda_{\mathsf{f}}\in \mathcal{B}\big(\ell^2(\Gamma)\big)
\]
defines a  $C^*$-isomorphism between $L^*(\Gamma)$ and a $C^*$-subalgebra 
of $\mathcal{B}\big(\ell^2(\Gamma)\big)$. 
Indeed,
from Theorem \ref{bessel} (see the second remark  in \ref{remark}), we obtain that any
$ \mathsf{f}\in L^*(\Gamma)$ satisfies $ \Lambda_{\mathsf{f}}\in \mathcal{B}\big(\ell^2(\Gamma)\big)$  and  $\|\mathsf{f}\|_{L^*(\Gamma)}=\|\mathbf{F}\|_{\mathcal{M}_{n}(L^\infty(\Gamma)}=
\esup_{\xi\in \widehat{N}} \|\mathbf{F}(\xi)\|_{2}=
B^{1/2}_{\mathsf{f}}=\|\Lambda_{\mathsf{f}}\|_{\mathcal{B}(\ell^2(\Gamma))}$; using \eqref{pro} in Theorem \ref{Tdomain} we obtain that $[\Lambda_{\mathsf{f}\ast\mathsf{g}}] \mathsf{h}= \Lambda_{\mathsf{g}}(\Lambda_{\mathsf{f}} \mathsf{h})$, for all $\mathsf{f},\mathsf{g}\in L^*(\Gamma)$ and $\mathsf{h}\in \ell^2(\Gamma)$; and from \eqref{analisis} we have $\Lambda_{\mathsf{f}^*}=\Lambda^*_{\mathsf{f}}$  for $\mathsf{f}\in L^*(\Gamma)$.
	 
Hence, from Theorem \ref{pe} we deduce that the operator
\[
\Lambda(L^*(\Gamma)) \ni  \Lambda_{\mathsf{f}} \xmapsto{\,\,\mathcal{S}\Lambda^{-1}\,} \mathbf{F} \in \mathcal{S}(L^*(\Gamma))
\]
 is a  $C^*$-isomorphism.

Assume first  (a), that is $\big\{L_{\gamma}\mathsf{f}\big\}_{\gamma\in \Gamma}$ is a Riesz basis for $\ell^2(\Gamma)$. Then, from Theorem \ref{bessel}, the entries of  the matrix-valued function $\mathbf{F}(\xi)$ belong to $L^\infty(\widehat{N})$ and the upper Riesz bound is $B_{f}<\infty$. Besides,
$\Lambda_{\mathsf{f}}\in \mathcal{B}\big(\ell^2(\Gamma)\big)$ and it is invertible. 
Since $\Lambda(L^*(\Gamma))$ is a unital $C^*$-subalgebra of 
$\mathcal{B}\big(\ell^2(\Gamma)\big)$, and 
 $\Lambda_{\mathsf{f}}$ belongs to $\Lambda(L^*(\Gamma))$, its inverse 
 $\Lambda^{-1}_{\mathsf{f}}$ also belongs to $\Lambda(L^*(\Gamma))$ \cite[Proposition 4.8]{takesaki:02}. Then, by applying the $C^*$-isomorphism $\mathcal{S}\Lambda^{-1}$, we obtain that $\mathbf{F}$ is invertible in the $C^*$-subalgebra $\mathcal{S}(L^*(\Gamma))$ and that the lower Riesz bound is \cite[Proposition 3.6.8]{ole:16}
 \[
 \begin{split}
&\| \Lambda^{-1}_{\mathsf{f}}\|^{-2}_{\mathcal{B}(\ell^2(\Gamma))}=
 \|\mathbf{F}^{-1}\|^{-2}_{\mathcal{M}_{\kappa}(L^\infty(\widehat{N}))}=\big[ \esup_{\xi\in \widehat{N}}\|\mathbf{F}^{-1}(\xi)\|^2_2\big]^{-1}\\&=  \big[\esup_{\xi\in \widehat{N}} \lambda^{-1}_{\text{min}} \mathbf{F}^*(\xi)\mathbf{F}(\xi)\big]^{-1}= \einf_{\xi\in \widehat{N}} \lambda_{\text{min}} \mathbf{F}^*(\xi)\mathbf{F}(\xi)= A_{\mathsf{f}}\,.
 \end{split}
 \]
Hence $A_{\mathsf{f}}>0$, and then, having in mind \eqref{det}, we prove condition (b).
 
 \medskip
Assume now (b). Since the entries of $\mathbf{F}(\xi)$ belong to $L^\infty(\widehat{N})$ and $\einf_{\xi\in \widehat{N}} |\det \mathbf{F}(\xi)|>0$, there exists $[\mathbf{F}^*(\xi)]^{-1}$, a.e. $\xi\in \widehat{N}$; besides, $[\mathbf{F}^*(\xi)]^{-1}\in \mathcal{M}_{\kappa}(L^\infty(\widehat{N}))$. Since $\mathcal{S}(L^*(\Gamma))$ is a $C^*$-subalgebra of $\mathcal{M}_{\kappa}(L^\infty(\widehat{N}))$ and 
$\mathbf{F}^*$ belongs to $\mathcal{S}(L^*(\Gamma))$, its
inverse $(\mathbf{F}^*)^{-1}$ also belongs to $\mathcal{S}(L^*(\Gamma))$. Hence there exists a unique $\mathsf{g}\in L^*(\Gamma)$ such that $\Lambda_{\mathsf{g}}=(\mathbf{F}^*)^{-1}$. By means of the $C^*$-isomorphism 
$\mathcal{S}\Lambda^{-1}$, we deduce that $\Lambda_{\mathsf{g}}\Lambda_{\mathsf{f}^*}=Id$, and then
\[
\sum_{\gamma\in \Gamma} \langle \mathsf{a},L_{\gamma}\mathsf{f} \rangle L_{\gamma}\mathsf{g}=\Lambda_{\mathsf{g}}\big(\Lambda_{\mathsf{f}^*}
\mathsf{a}\big)=
\mathsf{a}\,,\quad \mathsf{a}\in\ell^2(\Gamma).\]
From Theorem \ref{bessel}, the systems $\big\{L_{\gamma}\mathsf{f}\big\}_{\gamma\in \Gamma}$ and $\big\{L_{\gamma}\mathsf{g}\big\}_{\gamma\in \Gamma}$ are Bessel sequences.
Hence, from \cite[Theorem 3.6.6]{ole:16}, the system $\big\{L_{\gamma}\mathsf{f}\big\}_{\gamma\in \Gamma}$ is a Riesz basis for $\ell^2(\Gamma)$ with dual Riesz basis $\big\{L_{\gamma}\mathsf{g}\big\}_{\gamma\in \Gamma}$. 

Finally, since $\mathbf{F}^*(\xi)\mathbf{G}(\xi)= \mathbf{F}^*(\xi)[\mathbf{F}^*(\xi)]^{-1}=\mathbf{I}_\kappa$, a.e. $\xi\in \widehat{N}$, having in mind that 
 $\mathcal{T}_{\Gamma}\mathsf{g}(\xi)$ is the first column of the matrix $\mathbf{G}(\xi)$ and Lemma \ref{T}, we deduce that $\mathsf{g}$ is the unique element in $\ell^2(\Gamma)$ satisfying \eqref{dual1}.
\end{proof}

\bigskip

\noindent{\bf Acknowledgments:} 
This work has been supported by the grant MTM2017-84098-P from the Spanish {\em Ministerio de Econom\'{\i}a y Competitividad (MINECO)}.
\vspace*{0.3cm}


\end{document}